\newtheorem{theorem}{Theorem}[section]
\newtheorem{corollary}[theorem]{Corollary}
\newtheorem{lemma}[theorem]{Lemma}
\newtheorem{proposition}[theorem]{Proposition}
\newtheorem{example}[theorem]{Example}
\def\NN{\hbox{\sf I\kern-.13em\hbox{N}}}
\def\RR{\hbox{\sf I\kern-.14em\hbox{R}}}
\def\Cc{\hbox{\sf C\kern -.47em {\raise .48ex \hbox{$\scriptscriptstyle |$}}
   \kern-.5em {\raise .48ex \hbox{$\scriptscriptstyle |$}} }}
\newcommand{\cS}{{\mathcal S}}
\newcommand{\cC}{{\mathcal C}}
\newcommand{\cD}{{\mathcal D}}
\newcommand{\cM}{{\mathcal M}}
\begin{document}

\baselineskip 7mm

\title{From local to global ideal-triangularizability\footnote{The paper will appear in Linear and Multilinear Algebra.}}
\author{Roman Drnov\v{s}ek, Marko Kandi\'{c}}
\date{\today}

\begin{abstract}
\baselineskip 7mm
Let $L$ be a Banach lattice with order continuous norm, and let $\cS$ be a multiplicative semigroup of ideal-triangularizable positive 
compact operators on $L$ such that, for every pair $\{S, T\} \subseteq \cS$, the atomic diagonal of the commutator $S T - T S$ is equal to zero.
We prove that the semigroup $\cS$ is ideal-triangularizable. 
\end{abstract}

\maketitle

\noindent
{\it Math. Subj. Classification (2000)}: 47A15, 47B65. \\
{\it Key words}:  Invariant subspaces, Banach lattices, closed ideals, positive operators, idempotents, the atomic diagonal, 
semigroups of operators. \\
 
\baselineskip 7mm

\section{Introduction}
\vspace{5mm}

Let $L$ be a normed Riesz space, and let $L^+$ denote the positive cone of $L$. Elements of $L^+$ are called {\it positive vectors}. A linear subspace $J$ of $L$ is an ideal in $L$ whenever 
$|x|\leq |y|$ and $y\in J$ imply $x\in J$. A {\it band} in $L$ is an order closed ideal. A { band $B$ in $L$ is a {\it projection band} whenever $L=B\oplus B^d$. In this case, there exists a positive projection onto $B$ with a kernel $B^d$, and the direct sum is also an order direct sum.  A  Banach lattice $L$ is said to have an {\it order continuous norm} whenever every decreasing net $\{x_\alpha\}_\alpha$ with infimum $0$ of positive vectors in $L$ converges in norm to $0$. It is well known that a Banach lattice with order continuous norm is Dedekind complete, and that its 
closed ideals are bands \cite{AlAp}. Moreover, every band in a Dedekind complete Banach lattice is a projection band. 

A non-zero vector $a\in L^+$ is an {\it atom} in a normed Riesz space $L$ if $0 \le x, y \le a$ and $x \land y = 0$ imply 
either $x=0$ and $y=0$, or equivalently, if $0 \leq x \leq a$ implies $x=\lambda a$ for some $\lambda \geq 0$, i.e., 
the principal ideal $B_a$ generated by $a$ is one dimensional. It turns out that $B_a$ is a projection band \cite{LuxZaa}. The decomposition $L=B_a\oplus B_a^d$ implies that for an arbitrary (positive) vector $x\in L$ there exist a  (positive) scalar $\lambda_x$ and a (positive) vector $y_x\in B_a^d$ such that $x=\lambda_x a+y_x.$ The linear functional $\varphi_a: L\to \mathbb R$ associated to the atom $a$ is 
defined by $\varphi_a(x)=\lambda_x$.
A positive vector $x\in L$ is a {\it quasi-interior point} in $L$ if the principal ideal in $L$ generated by $x$ is dense in $L$. 

A (linear) operator $T$ between Riesz spaces $L_1$ and $L_2$  is said to be {\it positive} if $T$ maps the positive cone $L_1^+$ into the positive cone $L_2^+$. 
The {\it absolute kernel} $\mathcal N(T)$ of $T$ is defined as
$\mathcal N(T) = \{x\in L_1:\; T|x|=0\}$. If it is a zero ideal, then $T$ is said to be {\it strictly positive}.
The {\it range ideal} $\mathcal R(T)$ of $T$ is the ideal generated by the range of $T$.
Recall that every positive operator $T$ on a Banach lattice is continuous, and its spectral radius $r(T)$ belongs to its spectrum $\sigma(T)$.
Throughout the text we assume that functionals and operators acting on normed Riesz spaces are continuous. 

A family $\mathcal F$ of operators on a normed Riesz space $L$ is said to be {\it ideal-reducible} whenever there exists a nontrivial closed ideal in $L$ that is invariant under all operators from the family $\mathcal F$. 
Otherwise, we say that $\mathcal F$ is {\it ideal-irreducible}. 
The following very useful proposition is proved in \cite{DKan}.

\begin {proposition}\label{o razcepnosti}
Let $L$ be a normed Riesz space, and let $\mathcal S$ be a semigroup of positive operators on $L$. 
The following statements are equivalent:
\begin{enumerate}
	\item [(a)] $\mathcal S$ is ideal-reducible;
	\item [(b)] there exist a nonzero positive functional $\varphi \in L^*$ and a nonzero positive vector $f\in L^+$ 
	such that $\varphi(\cS f)=\{0\}$; 
	\item [(c)] there exist nonzero positive operators $A$ and $B$ on $L$ such that $A\mathcal SB=\{0\}$;
	\item [(d)] some nonzero semigroup ideal of $\mathcal S$ is ideal-reducible.
\end{enumerate}
 \end {proposition}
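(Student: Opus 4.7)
The plan is to prove the cycle $(a)\Leftrightarrow(b)\Leftrightarrow(c)$ together with $(a)\Leftrightarrow(d)$. The simplest piece is $(b)\Leftrightarrow(c)$, which I would realise through rank-one positive operators. Given $\varphi$ and $f$ as in (b), choose an auxiliary nonzero positive vector $u\in L^+$ and a nonzero positive functional $\psi\in L^*$, and set $Ax=\varphi(x)\,u$ and $Bx=\psi(x)\,f$; then $ASBx=\psi(x)\varphi(Sf)u=0$ for every $S\in\mathcal S$. Conversely, from (c), pick $x\in L^+$ with $Bx\ne 0$ and a positive $\psi\in L^*$ with $\psi\circ A\ne 0$, and set $f:=Bx$ and $\varphi:=A^{*}\psi$; the identity $\varphi(Sf)=\psi(ASBx)=0$, together with $\varphi$ and $f$ being nonzero and positive, yields (b).

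For $(a)\Leftrightarrow(b)$, the forward direction starts from a nontrivial closed $\mathcal S$-invariant ideal $J$: the quotient $L/J$ inherits the structure of a normed Riesz space, and a Hahn--Banach argument exploiting the lattice norm on $L/J$ produces a nonzero positive $\tilde\varphi\in(L/J)^{*}$ which pulls back via the quotient map to a nonzero positive $\varphi\in L^{*}$ that vanishes on $J$. Taking any nonzero $f\in J\cap L^+$ gives $\varphi(\mathcal S f)=\{0\}$ by invariance of $J$. For the reverse direction, when $\mathcal S f\ne\{0\}$ the closed ideal generated by the orbit $\mathcal S f$ is nonzero, $\mathcal S$-invariant by the semigroup property together with positivity of the operators, and contained in the proper closed absolute kernel $\mathcal N(\varphi)=\{x\in L:\varphi(|x|)=0\}$; the degenerate situation $\mathcal S f=\{0\}$ is handled by the closed principal band generated by $f$, which is $\mathcal S$-invariant because each $S$ annihilates every element dominated by $f$.

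For the last equivalence $(a)\Leftrightarrow(d)$, the implication $(a)\Rightarrow(d)$ is immediate since $\mathcal S$ is itself a nonzero semigroup ideal of $\mathcal S$. For the converse, given a nonzero semigroup ideal $\mathcal J$ of $\mathcal S$ that is ideal-reducible, I would apply the already established $(a)\Leftrightarrow(b)$ to $\mathcal J$ and obtain nonzero positive $\varphi$ and $f$ with $\varphi(\mathcal J f)=\{0\}$. If some $J_0\in\mathcal J$ satisfies $J_0 f\ne 0$, set $\tilde f:=J_0 f$; then for every $S\in\mathcal S$ the composition $SJ_0$ lies in $\mathcal J$, so $\varphi(S\tilde f)=\varphi((SJ_0)f)=0$, showing that $\mathcal S$ itself satisfies (b). If instead $\mathcal J f=\{0\}$, the absolute kernel $\mathcal N(\mathcal J):=\{x\in L:J|x|=0\text{ for every }J\in\mathcal J\}$ is a closed ideal which is nonzero (it contains $f$), proper (otherwise $\mathcal J=\{0\}$), and $\mathcal S$-invariant: for $x\in\mathcal N(\mathcal J)$, $S\in\mathcal S$, and $J\in\mathcal J$ one has $J|Sx|\le JS|x|=0$ since $JS\in\mathcal J$.

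The main obstacle is the case split inside $(d)\Rightarrow(a)$: the naive attempt to transfer the separating functional for $\mathcal J$ directly to $\mathcal S$ succeeds only when the orbit $\mathcal J f$ is nontrivial, and the complementary case forces one to replace the functional approach by the absolute-kernel construction. The validity of this substitute relies essentially on $\mathcal J$ being an ideal in $\mathcal S$ on both sides and on positivity of every operator involved.
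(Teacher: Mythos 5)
The paper does not actually prove Proposition \ref{o razcepnosti}; it states that it is proved in \cite{DKan}, so there is no internal proof to compare against. I will therefore comment on the correctness of your argument on its own terms.

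Your overall scheme, $(b)\Leftrightarrow(c)$, $(a)\Leftrightarrow(b)$, $(a)\Leftrightarrow(d)$, is sound, and most of the individual steps are correct. The rank-one realizations for $(b)\Leftrightarrow(c)$ work. In $(a)\Rightarrow(b)$ the existence of a nonzero positive functional vanishing on a proper closed ideal is exactly \cite[Lemma 1.1]{DKan}, which the paper also invokes elsewhere, so your Hahn--Banach sketch through the quotient is consistent with the sources. In $(b)\Rightarrow(a)$ with $\mathcal S f\ne\{0\}$, the closed ideal generated by the orbit is nonzero, $\mathcal S$-invariant (by positivity of the operators and the semigroup property), and sits inside $\mathcal N(\varphi)$, hence is proper; that part is complete. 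The $(d)\Rightarrow(a)$ case split is correctly handled, and in particular the closed ideal $\mathcal N(\mathcal J)=\bigcap_{J\in\mathcal J}\mathcal N(J)$ is the right object: it is nonzero (contains $f$), proper (since $\mathcal J\ne\{0\}$), and $\mathcal S$-invariant because $\mathcal J$ is a two-sided semigroup ideal.

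There is a genuine gap, however, in the degenerate sub-case of $(b)\Rightarrow(a)$ where $\mathcal S f=\{0\}$. You propose the ``closed principal band generated by $f$.'' Two problems. First, the object you want is the \emph{closed ideal} generated by $f$: your argument ``each $S$ annihilates every element dominated by $f$'' gives $S I_f=\{0\}$, hence $S\overline{I_f}=\{0\}$ by continuity, but it says nothing about elements of the principal band $B_f$ that are not dominated by a multiple of $f$, and in a general normed Riesz space $B_f$ is strictly larger than $\overline{I_f}$. Second, and more seriously, you never check that the candidate ideal is \emph{proper}. If $f$ is a quasi-interior point then $\overline{I_f}=L$, and the construction produces nothing. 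The fix is the same device you already use in $(d)\Rightarrow(a)$: take $\bigcap_{S\in\mathcal S}\mathcal N(S)$, which contains $f$, is a closed ideal, is $\mathcal S$-invariant (because $\mathcal S$ is a semigroup of positive operators), and is proper unless $\mathcal S=\{0\}$; the truly trivial case $\mathcal S=\{0\}$ can be disposed of separately. As written, your argument does not close this case.
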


If there is a chain $\mathcal C$ that is maximal as a chain of closed ideals of $L$ and that has the property that  every ideal in $\mathcal C$ is invariant under all the operators in a family $\mathcal F$, then $\mathcal F$ is said to be {\it ideal-triangularizable}, and $\mathcal C$ is an {\it ideal-triangularizing chain} for $\mathcal F$. Every maximal chain of closed ideals is also maximal as a chain of closed subspaces of $L$ \cite{Drn}. Let $I$ and $J$ be closed ideals in $L$ that are invariant under every operator from a family $\mathcal F$.
If $I \subseteq J$, then $\mathcal F$ induces a family $\widehat{\mathcal F}$ of operators on the quotient normed Riesz space $J/I$ as follows. 
For each $T\in \mathcal F$ the operator $\widehat T$ is defined on $J/I$ by 
$\widehat T(x+I)=Tx+I.$ Any such family $\widehat{\mathcal F}$ is called a {\it family of ideal-quotients} of the family $\mathcal F$. 
A set $\mathcal P$ of properties is said to be {\it inherited by ideal-quotients} if every family of ideal-quotients of a family of operators satisfying $\mathcal P$ also satisfies the same properties. 
Ideal-triangularizability of operators on Banach lattices is in practice often reduced to ideal-reducibility of operators. 
The details are contained in the following lemma that was proved in \cite{Drn1}.

\begin {lemma}  [The Ideal-triangularization Lemma]\label{trikotljivostna lema}
Let $\mathcal P$ be the set of properties inherited by ideal-quotients. If every family of operators on a Banach lattice of dimension greater than one which satisfies $\mathcal P$
is ideal-reducible, then every such family is ideal-triangularizable. 
\end {lemma}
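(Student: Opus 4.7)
The plan is to apply Zorn's lemma to obtain a maximal chain of $\mathcal{F}$-invariant closed ideals and then argue, by applying the hypothesis to the Banach lattice sitting inside each potential gap, that no further closed ideal of $L$ can be inserted into this chain.

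First I would fix a family $\mathcal F$ with property $\mathcal P$ on a Banach lattice $L$ with $\dim L > 1$, and consider the collection $\Omega$ of chains of closed $\mathcal F$-invariant ideals of $L$, ordered by inclusion. The union of any totally ordered subcollection of $\Omega$ is again a chain of closed $\mathcal F$-invariant ideals, so Zorn's lemma gives a maximal $\mathcal C_0 \in \Omega$. The remaining task is to show that $\mathcal C_0$ is in fact maximal as a chain of closed ideals of $L$, which is exactly what ideal-triangularizability requires.

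Suppose for contradiction that some closed ideal $K \notin \mathcal C_0$ is comparable with every member of $\mathcal C_0$. I would form the bracketing closed ideals
\[
I = \overline{\bigcup \{M \in \mathcal C_0 : M \subseteq K\}}, \qquad J = \bigcap \{M \in \mathcal C_0 : K \subseteq M\},
\]
with the usual conventions if a defining family is empty. These are $\mathcal F$-invariant (invariance of $I$ relying on continuity of the operators in $\mathcal F$) and satisfy $I \subseteq K \subseteq J$. Since $K$ is comparable with every member of $\mathcal C_0$, both $\mathcal C_0 \cup \{I\}$ and $\mathcal C_0 \cup \{J\}$ remain chains, so maximality of $\mathcal C_0$ forces $I, J \in \mathcal C_0$; as $K \notin \mathcal C_0$, the inclusions must be strict, $I \subsetneq K \subsetneq J$. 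Now I pass to the quotient Banach lattice $J/I$: because $\mathcal P$ is inherited by ideal-quotients, the induced family on $J/I$ satisfies $\mathcal P$, and if $\dim(J/I) > 1$ the standing hypothesis produces a nontrivial closed invariant ideal in $J/I$ whose preimage in $L$ is $\mathcal F$-invariant and lies strictly between $I$ and $J$. Adjoining this preimage to $\mathcal C_0$ contradicts the chosen maximality, so $\dim(J/I) \leq 1$, which is incompatible with $I \subsetneq K \subsetneq J$.

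The main obstacle I expect is the bookkeeping at the gap: verifying $\mathcal F$-invariance of $I$ (through continuity under closure), checking that $I$ and $J$ can really be adjoined to $\mathcal C_0$ (using comparability of $K$ with every element of the chain), and identifying closed ideals of $J/I$ with closed $\mathcal F$-invariant ideals of $L$ sandwiched between $I$ and $J$. None of these require new ideas; each follows from the observation that every $M \in \mathcal C_0$ is either contained in $K$, and hence in $I$, or contains $K$, and hence contains $J$.
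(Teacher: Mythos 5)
The paper itself does not reproduce a proof of this lemma but refers to \cite{Drn1}; your argument is the standard Zorn's-lemma-plus-gap argument that underlies that reference (and the Triangularization Lemma of Radjavi--Rosenthal). Your proof is correct: the bracketing ideals $I$ and $J$ are well-defined closed $\mathcal F$-invariant ideals comparable with every member of $\mathcal C_0$, so maximality forces $I,J\in\mathcal C_0$, the quotient $J/I$ is a Banach lattice on which the induced family satisfies $\mathcal P$, and lifting a nontrivial invariant closed ideal of $J/I$ back to $L$ enlarges $\mathcal C_0$, which is the required contradiction once one notes $\dim(J/I)\ge 2$ because $I\subsetneq K\subsetneq J$.
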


Let $T$ be an operator on a Banach lattice $L$ with an order continuous norm. 
If $B_1$ and $B_2$ are bands of $L$ such that $B_2 \subseteq B_1$, then the quotient Banach lattice 
$B_1/B_2$ is isometrically lattice isomorphic to the band $B := B_1 \cap B_2^d$, and so the norm of $B$ is also order continuous.
If $T$ leaves the bands $B_1$ and $B_2$ invariant, then the induced operator $\hat{T}$ on $B_1/B_2$ can be identified 
with the compression of $T$ on $B$, that is, with the operator $P_B T|_B$,  where $P_B$ denotes the band projection on $B$. 
   
For the terminology not explained in the text about normed Riesz spaces or Banach lattices and operators acting on them we refer the reader to classical textbooks \cite{AbAl}, \cite{AlAp}, \cite{Sch} and \cite{LuxZaa}.

Finally, we explain the organization of the paper. Section 2 deals with the structure of positive idempotents of finite ranks.
In Section 3 the atomic diagonal operator $\mathcal D$ introduced recently in \cite{MKP} is connected with the known band projection 
onto the center of a Dedekind complete Banach lattice. 
The main result of the paper is Theorem \ref{semigroup1} 
(and its finite-dimensional special case Corollary \ref{matrices}) in Section 4.  It reveals the relation between 
the atomic diagonal operator and the ideal-triangularizability of semigroups of positive operators.
We also prove Theorem \ref{complete_decomposability2} that completes \cite[Theorem 3]{MKP} and extends \cite[Theorem 3.8]{BMR} and 
\cite[Corollary 27]{MR05}.

\section {The structure of positive idempotents of finite rank}

The structure result for positive idempotents on $L^p$-spaces ($1\leq p<\infty$) was given by Zhong in \cite{Zhong};
see also the book \cite[Section 8.7]{RaRo} for a complete treatment. 
In this section we extend this result to Banach lattices with order continuous norm.

Let $L$ be a normed Riesz space, and let $P$ be a positive idempotent operator of rank one on $L$. 
Then there exist a positive vector $x \in L$ and a positive linear functional $\varphi$ on $L$ such that $\varphi(x) = 1$ and $P=x\otimes \varphi$, i.e., 
$Py=\varphi(y)x$ for $y \in L$. The ideal-irreducibility of such an idempotent is characterized by the following lemma that is a generalization of \cite[Lemma 8.7.11]{RaRo}.

\begin {lemma}\label{lema o idempotentu}
Let $L$ be a normed Riesz space, and let $P=x\otimes \varphi$ be a positive idempotent of rank one on $L$, where 
$x \in L$ is a positive vector and $\varphi$ a positive linear functional on $L$. The following statements are equivalent:
\begin {enumerate}
\item [(a)] $P$ is ideal-irreducible;
\item [(b)] The vector $x$ is a quasi-interior point of $L$, and $\varphi$ is a strictly positive functional on $L$;
\item [(c)] $P$ and $P^*$ are strictly positive operators on $L$ and $L^*$, respectively. 
 \end {enumerate}
\end {lemma}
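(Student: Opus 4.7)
The plan is to establish $(b)\Leftrightarrow(c)$ first (since both are direct algebraic consequences of the rank-one form of $P$), and then to show $(a)\Leftrightarrow(b)$ using the natural invariant closed ideals associated with $x$ and $\varphi$.

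For $(b)\Leftrightarrow(c)$, since $P=x\otimes\varphi$ one has $P^*\psi=\psi(x)\varphi$ for every $\psi\in L^*$. Because $x$ and $\varphi$ are nonzero, strict positivity of $P$ is equivalent to the implication $\varphi(|y|)=0\Rightarrow y=0$ for $y\in L$, which is precisely strict positivity of $\varphi$; similarly, strict positivity of $P^*$ is equivalent to $|\psi|(x)=0\Rightarrow\psi=0$ for $\psi\in L^*$. The latter is the standard dual characterization of $x$ being a quasi-interior point, obtained from the Hahn--Banach theorem together with the identity $|\psi|(x)=\sup\{|\psi(y)|:|y|\le x\}$ applied to the principal ideal $B_x$.

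For $(a)\Rightarrow(b)$, I would argue by contrapositive, producing a nontrivial closed $P$-invariant ideal whenever (b) fails. If $\varphi$ is not strictly positive, then its absolute kernel $\mathcal N(\varphi)$ is a nonzero ideal on which $P$ vanishes, and its norm closure is a nonzero closed $P$-invariant ideal; properness follows from the continuity and nontriviality of $\varphi$. If instead $x$ is not a quasi-interior point, the closure of the principal ideal $B_x$ is a proper nonzero closed ideal that contains the range of $P$, hence is $P$-invariant. Either case contradicts ideal-irreducibility. For $(b)\Rightarrow(a)$, let $J$ be a closed $P$-invariant ideal with $J\neq\{0\}$ and pick a nonzero $y\in J^+$; strict positivity of $\varphi$ gives $\varphi(y)>0$, so $x=\varphi(y)^{-1}Py$ lies in $J$, and then $J\supseteq \overline{B_x}=L$ by quasi-interiority.

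The only conceptually non-routine step is the equivalence between quasi-interiority of $x$ and the dual condition $|\psi|(x)=0\Rightarrow\psi=0$ used in $(b)\Leftrightarrow(c)$; everything else reduces to a one-line verification using the rank-one structure of $P$, so I do not expect genuine obstacles, only the need to state the quasi-interior characterization carefully in the normed Riesz space setting.
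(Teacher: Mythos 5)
Your proof is correct and uses the same underlying ingredients as the paper (the ideals $\mathcal N(\varphi)$ and $\overline{B_x}$, and the dual characterization of quasi-interior points via positive functionals, which is \cite[Lemma 4.15]{AbAl}). There is one genuine difference, in the implication $(b)\Rightarrow(a)$. The paper starts from a \emph{proper} closed $P$-invariant ideal $J$, invokes \cite[Lemma 1.1]{DKan} to produce a nonzero positive functional $\psi$ annihilating $J$, computes $0=\psi(Py)=\psi(x)\varphi(y)$ for $y\in J$, and concludes $J=\{0\}$ using $\psi(x)>0$ and strict positivity of $\varphi$. You instead start from a \emph{nonzero} closed $P$-invariant ideal $J$, pick $0\neq y\in J^+$, use strict positivity of $\varphi$ to show $x=\varphi(y)^{-1}Py\in J$, hence $\overline{B_x}\subseteq J$, and then quasi-interiority forces $J=L$. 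Your version is more elementary: it stays entirely in $L$, avoids the Hahn--Banach step and the external reference, and is shorter. Both arguments are dual formulations of the same dichotomy; the paper's route has the small advantage of reusing the functional-separation toolkit that is needed elsewhere in the paper, while yours is cleaner for this isolated lemma. The rest of your proof (establishing $(b)\Leftrightarrow(c)$ directly from the rank-one structure and $P^*\psi=\psi(x)\varphi$, and proving $(a)\Rightarrow(b)$ by contrapositive with $\overline{\mathcal N(\varphi)}\subseteq\ker\varphi$ giving properness) is substantively the same as the paper's, merely reorganized.
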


\begin {proof}
	To see that (a) implies (b), assume that $P$ is ideal-irreducible. 
	If $y$ is a positive vector in the absolute kernel $\mathcal N(\varphi)$ of $\varphi$, then 
	$Py=\varphi(y)x=0$  implies that $\mathcal N(\varphi)$ is invariant under $P$. Since $P$ is ideal-irreducible, we have $\mathcal N(\varphi)=\{0\},$ so that $\varphi$ is strictly positive on $L$. 
	If $0\leq |y|\leq \lambda x$ with $y\in L$ and $\lambda\geq 0$, then
	$|Py|\leq P|y|\leq \lambda Px=\lambda \varphi(x)x$ implies that the closure of the principal ideal  generated by the positive vector $x$ is invariant under $P$.
	Hence, $x$ needs to be a quasi-interior point in $L$. 

      Assume that (b) holds. To prove (a), assume that $J$ is a proper closed ideal that is invariant under the operator $P$. 
	By \cite[Lemma 1.1]{DKan}, there exists a nonzero positive functional $\psi$ on $L$ which is zero on $J$. 
      Then for  $y \in J$ we have $0=\psi(Py)=\psi(x) \varphi(y)$. Since $x$ is a quasi-interior point,  $\psi(x) > 0$ by \cite[Lemma 4.15]{AbAl}, and so we conclude that  $\varphi(y)=0$.
      Since $\varphi$ is a strictly positive functional, this implies that $y=0$, so that  $J = \{0\}$. Therefore, the operator $P$ does not have nontrivial closed invariant ideals.  

      Assume again that (b) holds. To show (c), assume that $Py=0$ and $P^* \psi=0$ for some positive vectors $y$ and $\psi$ in $L$ and $L^*$, respectively.
	Since $\varphi$ is a strictly positive functional on $L$, the equality $0 = Py=\varphi(y)x$ implies that $y=0$. 
	Since $x$ is a quasi-interior point in $L$, the equality $0 = P^*(\psi)=\psi(x)\varphi$ implies that $\psi=0$ by \cite[Lemma 4.15]{AbAl}.

	To see that (c) implies (b), suppose that $P$ and $P^*$ are strictly positive operators on $L$ and $L^*$, respectively.
	Let $\psi$ be an arbitrary positive functional on $L$ with $\psi(x)=0$. 	Then 
	$P^*\psi=(\varphi\otimes x)\psi=\psi(x)\varphi=0$ implies $\psi=0$, since $P^*$ is strictly positive. It follows from \cite[Lemma 4.15]{AbAl} that $x$ is a quasi-interior point in $L$.  
	Suppose now that $\varphi(y)=0$ for some positive vector $y\in L$.  Then $Py=(x\otimes \varphi)(y)=\varphi(y)x=0$. Since $P$ is strictly positive, we have $y=0$, so that $\varphi$ is strictly positive on $L$. 
\end {proof} 

Note that in the case when any of the equivalent statements of Lemma \ref{lema o idempotentu} holds for a positive ideal-triangularizable idempotent $P$ of rank one, 
then $L$ is lattice isomorphic to $\mathbb R$, and $P$ is just the identity on $\mathbb R$. 

Positive idempotents of finite rank are treated in the following propositions that extend  \cite[Proposition 8.7.12]{RaRo}. 
It should be noted that these propositions contain additional statements regarding the structure of finite rank idempotents that are also ideal-triangularizable. 

\begin {proposition}\label{o idempotentu 2}
	Let $L$ be a Banach lattice with order continuous norm and $P$ a positive idempotent of finite rank $r$ such that 
	$P$ and $P^*$ are strictly positive. There exist pairwise disjoint bands $L_1,\ldots, L_r$ in $L$ such that 
	$L=L_1\oplus \cdots \oplus L_r$ and $P$ is of the form 
	$$P=P_1\oplus \cdots \oplus P_r,$$ where $P_j$ is an ideal-irreducible positive idempotent of rank $1$ on $L_j.$
	If, in addition, $P$ is ideal-triangularizable, then $r=\dim L$ and $P$ is the identity operator on $L$. 
\end {proposition}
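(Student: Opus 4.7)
My plan is to produce a positive basis of the range $P(L)$ consisting of pairwise disjoint vectors, from which the desired band decomposition reads off directly. The set $C := P(L^+) = P(L) \cap L^+$ is a closed pointed convex cone in the $r$-dimensional subspace $P(L)$, and since $L = L^+ - L^+$ it spans $P(L)$. The key geometric step is to show that distinct extreme rays of $C$ produce pairwise disjoint positive vectors: given representatives $x_1, x_2$ of two distinct extreme rays, the vector $P(x_1 \wedge x_2) \in C$ lies below $P x_i = x_i$ for each $i$, so extremality forces $P(x_1 \wedge x_2) = \lambda_i x_i$ with $\lambda_i \geq 0$; since the rays differ, $\lambda_1 = \lambda_2 = 0$, and strict positivity of $P$ then gives $x_1 \wedge x_2 = 0$. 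Pairwise disjoint positive vectors are linearly independent, so $C$ has at most $r$ extreme rays; the finite-dimensional Krein--Milman theorem ensures that a closed pointed cone spanning $\mathbb{R}^r$ has at least $r$ of them. Hence exactly $r$ extreme rays exist, with representatives $x_1, \ldots, x_r$ forming a pairwise disjoint positive basis of $P(L)$.

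I then write $P = \sum_{i=1}^r x_i \otimes \psi_i$ with $\psi_i \in L^*$ uniquely determined by this basis. The pairwise disjointness of the $x_i$ makes the decomposition of the positive vector $Py$ along the basis componentwise nonnegative, so each $\psi_i$ is positive, and $P x_j = x_j$ yields $\psi_i(x_j) = \delta_{ij}$. Let $L_j$ be the band generated by $x_j$ in $L$; these bands are pairwise disjoint, and their sum exhausts $L$: if the band $B := L_1 \oplus \cdots \oplus L_r$ were proper, \cite[Lemma 1.1]{DKan} would furnish a nonzero positive $\psi \in L^*$ vanishing on $B$, hence on each $x_i$, and then $P^* \psi = \sum_i \psi(x_i) \psi_i = 0$ would contradict the strict positivity of $P^*$. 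For $i \neq j$, the positive continuous functional $\psi_i$ vanishes at $x_j$, hence on the principal ideal generated by $x_j$, which by order continuity of the norm is dense in $L_j$; so $\psi_i|_{L_j} = 0$. Consequently $P$ leaves each $L_j$ invariant and restricts there to the rank-one idempotent $P_j := x_j \otimes (\psi_j|_{L_j})$. Since $x_j$ is a quasi-interior point of $L_j$ and $\psi_j|_{L_j}$ is strictly positive on $L_j$ (for $0 < y \in L_j$, strict positivity of $P$ together with the vanishing of the other $\psi_i$ on $L_j$ forces $\psi_j(y) > 0$), Lemma \ref{lema o idempotentu} yields the ideal-irreducibility of each $P_j$.

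For the final assertion, assume $\mathcal{C}$ is a maximal chain of closed $P$-invariant ideals of $L$; by \cite{Drn}, $\mathcal{C}$ is also maximal as a chain of closed subspaces. Fix $j$. Since $P_j$ is ideal-irreducible on $L_j$, the induced chain $\{C \cap L_j : C \in \mathcal{C}\}$ of $P_j$-invariant closed ideals is contained in $\{\{0\}, L_j\}$, so there exist consecutive $C' \subsetneq C$ in $\mathcal{C}$ with $C' \cap L_j = \{0\}$ and $C \cap L_j = L_j$. Then $C' \oplus L_j \subseteq C$ gives $\dim(C/C') \geq \dim L_j$, and maximality as a chain of closed subspaces forces $\dim(C/C') = 1$, so $\dim L_j = 1$. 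Thus $\dim L = r$, each $L_j$ is lattice isomorphic to $\mathbb{R}$, and each $P_j$ is the identity, whence $P$ is the identity on $L$. The main obstacle I anticipate is the first paragraph's extraction of the disjoint positive basis: the simultaneous use of extremality in $C$ and strict positivity of $P$ is what both establishes the existence of exactly $r$ extreme rays and forces them to be pairwise disjoint, and this combinatorial-geometric step carries the essential content of the proof.
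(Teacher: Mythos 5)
Your proof is correct, but it follows a genuinely different route from the paper's. The paper argues by induction on the rank $r$: for $r>1$ it picks two linearly independent positive range vectors $x\leq y$, forms $z=x-sy$ for suitable $s$ so that $z^{\pm}\neq 0$, shows $Pz^{+}=z^{+}$ using strict positivity of $P$, takes $J$ to be the closed ideal generated by $z^{+}$ (proper because $z^{-}\neq 0$, invariant because $Pz^{+}=z^{+}$), writes $P$ as a $2\times 2$ block upper-triangular operator on $J\oplus J^{d}$, and kills the off-diagonal block using the equations $P^{2}=P$ together with strict positivity of $P_1$ and $P_4^{*}$ — after which the claim follows by induction. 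Your argument instead extracts the disjoint decomposition in one stroke from the convex geometry of the cone $C=P(L^{+})=P(L)\cap L^{+}$: extremality together with strict positivity of $P$ forces distinct extreme rays to be disjoint, the Minkowski/Klee form of Krein--Milman for closed pointed cones then pins the number of extreme rays to exactly $r$, and the band decomposition, the rank-one idempotents $P_j=x_j\otimes(\psi_j|_{L_j})$, and their ideal-irreducibility via Lemma~\ref{lema o idempotentu} all follow cleanly. The two approaches buy different things: the paper's induction is entirely lattice-theoretic and avoids any appeal to convex geometry (so it is more self-contained in the Riesz-space setting), while your cone argument is noninductive and makes the structure of the range visible immediately, essentially locating the atoms of $P(L)$ as a finite-dimensional Riesz space. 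Your handling of the final assertion is also different: the paper gets it for free from the $r=1$ remark plus the induction, whereas you argue directly with a maximal triangularizing chain; your step producing the consecutive pair $C'\subsetneq C$ with $C'\cap L_j=\{0\}$ and $C\cap L_j=L_j$ is left implicit but is easily justified from completeness of the chain and the fact that $L_j^{d}$ is closed. I see no gap in your argument.
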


\begin {proof}
	If $r=1$, then the assertion follows from Lemma \ref{lema o idempotentu} and the remark following it. Assume $r>1.$
	Let $x$ and $y$ be linearly independent positive vectors in the range of the operator $P.$
	Replacing $y$ by $x+y$ (if necessary) we may assume $0\leq x\leq y$ and $x \neq y$.
	Let $t_0$ be the supremum of the nonempty set $\{t\in \mathbb R:\; t\geq 0, \,ty\le x\}$ that is bounded from above.  
      Since $t_0 y \le x \le y$ and $x \neq y$, we have $t_0<1$. Pick $s \in (t_0, 1)$ and let $z=x-sy.$
	The vectors $z^+$ and $z^-$ are both nonzero, since $z$ is neither positive nor negative. 
	From 
	$z=Pz=Pz^+-Pz^-\leq Pz^+$ it easily follows $z^+\leq Pz^+$. Since $P$ is strictly positive, $P(Pz^+-z^+)=0$ implies $Pz^+=z^+.$
	Let $J$ be the closed ideal generated by the vector $z^+.$  Note that $J$ is invariant under $P$ and it is nontrivial, since $z^-$ is nonzero. 
	Therefore, the operator $P$ can be decomposed as
	$$\left[\begin {array}{cc}
	P_1 & P_2\\
	0 & P_4\\
	\end {array}
	\right]$$ with respect to the decomposition $L=J \oplus J^d$.

	We claim that $P_2=0$. Since  $P^2=P$, we have $P_1^2=P_1$, $P_4^2=P_4$ and $P_1P_2+P_2P_4=P_2.$ We conclude that $P_1P_2P_4=0.$ 
	Since the operator $P$ is strictly positive, the operator $P_1$ is also strictly positive, and so $P_2P_4=0.$ Taking the adjoints we have $P_4^*P_2^*=0$. 
	Since the adjoint $P^*$ is strictly positive, so is the operator $P_4^*$ on $(J^d)^*$, and hence $P_2=0$ as claimed.  	
	We finish the proof  by induction on $r$. 
\end {proof}

\begin {proposition}\label{o idempotentu 3}
		Let $L$ be a Banach lattice with order continuous norm and $P$ a positive idempotent of finite rank $r$ on $L$. 
		There exist pairwise disjoint bands $B_1$, $B_2$ and $B_3$ with $L=B_1\oplus B_2\oplus B_3$
		such that $P$ can be decomposed as 
  \begin{equation}
  \label{decomp}
		P = \left[\begin {array}{ccc}
		0 & XQ & XQY\\
		0 & Q & QY\\
		0 & 0 & 0\\
		\end {array}\right] = 
		\left[\begin {array}{c}
		X \\
		I \\
		0 \\
		\end {array}\right] Q 
		\left[\begin {array}{ccc}
		0 & I & Y \\
		\end {array}\right],
  \end{equation}		
		where $Q$ is an idempotent of rank $r$ such that 
		$Q$ and $Q^*$ are strictly positive operators on $B_2$ and $B_2^*$, respectively. 
		Moreover, there exist pairwise disjoint bands $L_1,\ldots, L_r$ in $B_2$ with 
		$B_2=L_1\oplus \cdots \oplus L_r$ such that $Q=Q_1\oplus \cdots \oplus Q_r$, where 
		$Q_j$ is a strictly positive idempotent of rank one on $L_j.$
		If, in addition, $P$ is ideal-triangularizable, then the dimension of $B_2$ is finite and $Q$ is the identity operator on $B_2.$
	\end {proposition}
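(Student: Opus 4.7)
The plan is to first peel off the band $B_1 := \mathcal{N}(P)$ and then split its disjoint complement into the two remaining bands. Since $\mathcal{N}(P)$ is a closed ideal in a Banach lattice with order continuous norm, it is a band. With respect to $L = B_1 \oplus B_1^{d}$ the first column of $P$ vanishes, so if I set $D := \pi_{B_1^{d}} P|_{B_1^{d}}$ (where $\pi_{B_1^{d}}$ denotes the band projection onto $B_1^{d}$), the identity $P^2 = P$ forces $D^2 = D$. I then verify that $D$ is strictly positive on $B_1^{d}$: for positive $x \in B_1^{d}$ with $Dx = 0$ one has $Px \in B_1 = \mathcal{N}(P)$, so $P^2 x = 0$, but also $P^2 x = Px$, giving $Px = 0$ and hence $x \in B_1 \cap B_1^{d} = \{0\}$.

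Next I take $B_2$ to be the band in $B_1^{d}$ generated by the range of $D$ and $B_3$ its disjoint complement in $B_1^{d}$, giving $L = B_1 \oplus B_2 \oplus B_3$ with pairwise disjoint bands and $\mathcal{R}(D) \subseteq B_2$. Writing $P$ as a $3 \times 3$ block matrix against this decomposition, the first column vanishes (as $P|_{B_1} = 0$) and the last row vanishes (as $Px \in B_1 \oplus B_2$ for every $x \in L$). Expanding $P^2 = P$ across the blocks then delivers $Q^2 = Q$, $P_{12} Q = P_{12}$, $Q P_{23} = P_{23}$ and $P_{13} = P_{12} P_{23}$, where $Q := D|_{B_2}$; setting $X := P_{12}$ and $Y := P_{23}$ yields exactly the factorisation (\ref{decomp}). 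A rank count in this factorisation (the outer factors being injective, resp.\ surjective) gives $\mathrm{rank}\,Q = r$.

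The main obstacle is the strict positivity of $Q^*$ on $B_2^*$. Strict positivity of $Q$ itself is immediate, since a positive $x \in B_2$ with $Qx = 0$ satisfies $Dx = Qx = 0$, forcing $x = 0$. For the adjoint, a positive $\varphi \in B_2^*$ annihilated by $Q^*$ vanishes on $\mathcal{R}(Q) = \mathcal{R}(D)$ (every fixed point of $D$ lies in $B_2$). Positivity of $\varphi$ then propagates the vanishing to the whole order ideal in $B_2$ generated by $\mathcal{R}(D)^{+}$, and here the order continuity of the norm is decisive: it guarantees that this ideal is dense in the band it generates, which is exactly $B_2$. Continuity of $\varphi$ concludes $\varphi = 0$, after which Proposition \ref{o idempotentu 2} applied to $Q$ on $B_2$ delivers the finer decomposition $B_2 = L_1 \oplus \cdots \oplus L_r$ with $Q = Q_1 \oplus \cdots \oplus Q_r$.

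For the ideal-triangularisability addendum, I would observe that the bands $B_1$ and $B_1 \oplus B_2$ are both $P$-invariant, so by the identification of quotients with compressions recalled in Section~1, $Q$ arises as an ideal-quotient of $P$ and hence inherits ideal-triangularisability; the corresponding clause of Proposition \ref{o idempotentu 2} then forces $\dim B_2 = r$ and $Q$ to be the identity operator on $B_2$.
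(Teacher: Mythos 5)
Your proof is correct and follows essentially the same route as the paper: peel off the absolute kernel $B_1=\mathcal N(P)$, take $B_2$ to be the band generated by the (compressed) range of $P$ in $B_1^d$, read off the block identities from $P^2=P$, verify strict positivity of $Q$ and $Q^*$, and invoke Proposition~\ref{o idempotentu 2}. The only differences are cosmetic and in your favour: you introduce the intermediate idempotent $D$ on $B_1^d$, spell out why vanishing of a positive functional on $\mathcal R(Q)$ forces it to vanish on all of $B_2$ (via order continuity making the closed ideal generated by $\mathcal R(Q)^+$ a band), and you make explicit that $Q$ is an ideal-quotient of $P$ so inherits ideal-triangularizability, a point the paper leaves implicit when it appeals to Proposition~\ref{o idempotentu 2}.
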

	
	\begin {proof}
		Let $B_1$ be the absolute kernel of the operator $P$, and let 
                     $B$ be the band generated by $B_1$ and the range of the operator $P$.
		Let $B_2=B\cap B_1^d$ and $B_3=B^d.$
		Let $$\left[\begin {array}{ccc}
		0 & X & Z\\
		0 & Q & Y\\
		0 & 0 & 0\\
		\end {array}\right]$$
		be the operator matrix corresponding to the operator $P$ with respect to the decomposition $L=B_1\oplus B_2\oplus B_2$.
		
		Since $P$ is an idempotent, we have $Q^2=Q$, $X=XQ$, $Y=QY$ and $Z=XY$, so that the decomposition (\ref{decomp}) is proved.
		It also shows that the rank of $Q$ must be $r$. 
		We claim that $Q$ and $Q^*$ are strictly positive operators on $B_2$ and $B_2^*$, respectively. 
		Suppose that for some positive vector $x\in B_2$ we have $Qx=0.$ Then 
		$Xx=XQx=0$, so that $Px=0$ implies $x=0$. Suppose that  a positive functional $\varphi$  in $B_2^*$ satisfies $Q^*\varphi=0.$
		Then $\varphi$ is zero on the range of the operator $P$ which implies $\varphi=0$.
		The last two assertions hold by Proposition \ref{o idempotentu 2}.
\end {proof}

\section {The atomic diagonal operator}

Throughout this section, let $L$ be a Dedekind complete Banach lattice. 
Denote by $\mathcal L_r(L)$ the Dedekind complete Riesz space of all regular operators on $L$, i.e., those operators that are linear combinations of positive operators. It is well known that $\mathcal L_r(L)$ becomes a Banach lattice algebra with respect to the regular norm defined by 
$\|T\|_r:=\||T|\|.$ The center $\mathcal Z(L)$  is the ideal in $\mathcal L_r(L)$ generated by the identity operator $I$, i.e., 
$$\mathcal Z(L)=\{T\in \mathcal L_r(L):\; |T|\leq  \lambda I \textrm{ for some }\lambda\geq 0\}.$$
If $T\in \mathcal Z(L)$, then the operator norm and the regular norm of $T$ coincide. Since $\mathcal Z(L)$ is also a band in $\mathcal L_r(L)$, we have a band decomposition 
$\mathcal L_r(L)=\mathcal Z(L)\oplus \mathcal Z(L)^d.$
Let $\mathcal P$ be the band projection onto $\mathcal Z(L).$ 
By a result of Voigt \cite{Vo88}, $\mathcal P$ is a contraction with respect to the operator norm. 
Schep \cite{Sc80} proved that the component $\mathcal P(T)$ of a positive operator $T$ in $\mathcal Z(L)$ is 
\begin {equation}\label{schepF}
\mathcal P(T)=\inf\left\{\sum_{i=1}^n P_iTP_i:\; 0\leq P_i\leq I,\, P_i^2=P_i,\, \sum_{i=1}^n P_i=I\right\}.
\end {equation}
 
Let $A$ be the band generated by all atoms in $L$, and let $\mathcal A \subseteq A$ be the maximal set of pairwise disjoint atoms of norm one. 
Given $a \in \mathcal A$, we denote by $P_a$ the band projection onto the band $B_a$.
Let $T$ be a positive operator on $L$. In  \cite{MKP} it is proved that the operator 
$$\mathcal D(T)=\sup\left\{\sum_{a\in \mathcal F}P_aTP_a:\; \mathcal F \textrm{ is a finite subset of }\mathcal A\right\}$$
exists, since it is a supremum of an increasing net that is bounded from above,  and it also satisfies $0\leq \mathcal D(T)\leq T.$ 
If $L$ is atomic (i.e., $A = L$), then $\mathcal D(T) = \mathcal P(T)$. For a general $L$ we have

\begin {proposition}\label{atom diag predpis}
Let $T$ be a positive operator on $L$. If $P_A$ denotes the band projection onto the band $A$, 
then $\mathcal D(T)=P_A\mathcal P(T)$. 
\end {proposition}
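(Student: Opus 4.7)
The plan is to establish the two inequalities $\mathcal{D}(T)\le P_A\mathcal{P}(T)$ and $P_A\mathcal{P}(T)\le\mathcal{D}(T)$ separately, relying on two standard facts: the band-component characterization $\mathcal{P}(T)=\sup\{S\in\mathcal{Z}(L):0\le S\le T\}$, together with the property that every $S\in\mathcal{Z}(L)$ commutes with every band projection, so in particular $SP_a=P_aS$ for each $a\in\mathcal{A}$. Both directions amount to fitting the finite sums $\sum_{a\in\mathcal{F}}P_aTP_a$ into this characterization.

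For the upper bound, I fix a finite $\mathcal{F}\subseteq\mathcal{A}$ and note that each summand $P_aTP_a$ equals $\varphi_a(Ta)\,P_a$, hence lies in $\mathcal{Z}(L)$; a short check on positive vectors gives $\sum_{a\in\mathcal{F}}P_aTP_a\le T$. Thus this sum belongs to $\{S\in\mathcal{Z}(L):0\le S\le T\}$ and is dominated by $\mathcal{P}(T)$. Since its range is contained in the atomic band $A$, multiplication by $P_A$ on the left leaves it unchanged, yielding $\sum_{a\in\mathcal{F}}P_aTP_a\le P_A\mathcal{P}(T)$. Taking the supremum over $\mathcal{F}$ gives $\mathcal{D}(T)\le P_A\mathcal{P}(T)$.

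For the lower bound, I use that $\mathcal{P}(T)\in\mathcal{Z}(L)$ commutes with each $P_a$, so $P_a\mathcal{P}(T)=P_a\mathcal{P}(T)P_a\le P_aTP_a$ by $\mathcal{P}(T)\le T$. Summing over $\mathcal{F}$ produces $P_\mathcal{F}\mathcal{P}(T)\le\sum_{a\in\mathcal{F}}P_aTP_a\le\mathcal{D}(T)$, where $P_\mathcal{F}=\sum_{a\in\mathcal{F}}P_a$. The desired inequality will follow once I establish $\sup_\mathcal{F}P_\mathcal{F}\mathcal{P}(T)=P_A\mathcal{P}(T)$, which in turn reduces to proving $P_\mathcal{F}y\uparrow P_Ay$ in $L$ for each $y\in L^+$.

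I expect this last supremum identity to be the main technical obstacle. The argument runs via the equivalent claim $\inf_\mathcal{F}(P_A-P_\mathcal{F})y=0$: if $v\in L^+$ satisfies $v\le(P_A-P_\mathcal{F})y$ for every finite $\mathcal{F}$, then specializing to $\mathcal{F}=\{a\}$ forces $v\wedge a=0$ for every $a\in\mathcal{A}$, while still $v\le P_Ay\in A$. By maximality of the disjoint family $\mathcal{A}$, the band $A$ is generated by $\mathcal{A}$, so a nonzero element of $A$ cannot be disjoint from every element of $\mathcal{A}$; hence $v=0$. Dedekind completeness of $L$ then upgrades the vanishing infimum to the claimed monotone supremum, and the proof is complete.
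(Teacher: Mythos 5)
Your argument is correct, but it takes a route genuinely different from the paper's. The paper's proof uses Schep's infimum formula for $\mathcal P(T)$: since $P_A \in \mathcal Z(L)$ commutes with $\mathcal P(T)$ and is order continuous, $P_A\mathcal P(T)$ can be rewritten as an infimum of sums $\sum_i Q_i T Q_i$ where the $Q_i$ are idempotent components summing to $P_A$; that is exactly Schep's formula applied inside the atomic band $A$, so the paper finishes by invoking the special case $\mathcal D=\mathcal P$ on atomic Dedekind complete Banach lattices (asserted just before the proposition). You instead start from the supremum characterization $\mathcal P(T)=\sup\{S\in\mathcal Z(L): 0\le S\le T\}$ of band components and prove the two inequalities directly: the estimate $\sum_{a\in\mathcal F}P_aTP_a\le T$ with each summand $\varphi_a(Ta)P_a\in\mathcal Z(L)$ gives $\mathcal D(T)\le P_A\mathcal P(T)$, while the commuting identity $P_a\mathcal P(T)=P_a\mathcal P(T)P_a\le P_aTP_a$ together with $\sup_{\mathcal F}P_{\mathcal F}=P_A$ (which you reduce to the maximality of the disjoint system $\mathcal A$) gives the reverse. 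What the paper's approach buys is brevity, at the cost of delegating the atomic case to an earlier (uncited here) result; what your approach buys is self-containment. One small step you leave implicit should be spelled out: passing from $P_{\mathcal F}y\uparrow P_Ay$ to $P_{\mathcal F}\mathcal P(T)\uparrow P_A\mathcal P(T)$ requires the order continuity of $\mathcal P(T)$; this holds easily because $0\le \mathcal P(T)\le\lambda I$ for some $\lambda\ge 0$, so for any $x_\alpha\downarrow 0$ one has $0\le\mathcal P(T)x_\alpha\le\lambda x_\alpha\downarrow 0$.
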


\begin {proof}
Since $P_A \in \mathcal Z(L)$, it commutes with $\mathcal P(T)$, and so  $P_A\mathcal P(T)=P_A\mathcal P(T)P_A.$
Applying Schep's formula (\ref{schepF}) we obtain 
$$ P_A\mathcal P(T)=P_A\inf\left\{\sum_{i=1}^n P_iTP_i:\; 0\leq P_i\leq I,\, P_i^2=P_i,\, \sum_{i=1}^n P_i=I\right\}P_A .$$ 
Since $0\leq P_A\leq I$, $P_A$ is order continuous, so that 
$$ P_A\mathcal P(T)= \inf\left\{\sum_{i=1}^n (P_A P_i) T (P_i P_A):\; 0\leq P_i\leq I,\, P_i^2=P_i,\, \sum_{i=1}^n P_i=I\right\} = $$ 
$$ = \inf\left\{\sum_{i=1}^n Q_i T Q_i : \; 0\leq Q_i\leq P_A,\, Q_i^2=Q_i,\, \sum_{i=1}^n Q_i=P_A\right\} . $$ 
Since $A$ is an atomic Dedekind complete Banach lattice, $P_A\mathcal P(T) = \mathcal D(T)$.
\end {proof}

We use the equality from Proposition \ref{atom diag predpis} to extend the operator $\mathcal D$ to the operator on the whole space $\mathcal L_r(L)$, so we define ${\mathcal D}(T) := P_A\mathcal P(T)$ for $T \in \mathcal L_r(L)$. 
This extension is called the {\it atomic diagonal operator} and the operator $\mathcal D(T)$ on $L$ is said to be the {\it atomic diagonal} of an operator $T \in \mathcal L_r(L)$. 

\begin {proposition}\label{zvez diag}
The following assertions hold for the atomic diagonal operator $\mathcal D$:

(a) $\mathcal D$ is a band projection onto the band 
$$  \{T\in \mathcal L_r(L):\; |T|\leq \lambda P_A \textrm{ for some }\lambda \geq 0\}$$
in $\mathcal L_r(L)$ that can be identified with the center ${\mathcal Z}(A)$;

(b) $\|\cD(T)\| \leq \|\mathcal P(T)\| \leq \|T\|$ for all $T\in \mathcal L_r(L)$;

(c) $\mathcal P(T)=\mathcal D(T)$ for every positive compact operator $T$ on $L$.
\end {proposition}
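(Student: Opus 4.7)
My plan is to treat (a) and (b) as near-immediate consequences of the formula $\mathcal D=P_A\mathcal P$, and to focus the real work on (c).

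For (a), $P_A$ is a band projection on $L$, so $0\le P_A\le I$ and $P_A\in\mathcal Z(L)$, while $\mathcal P$ is a band projection of $\mathcal L_r(L)$ onto $\mathcal Z(L)$. I would check that $\mathcal D$ is idempotent (using $\mathcal P|_{\mathcal Z(L)}=\mathrm{id}$ and $P_A^2=P_A$) and that $0\le\mathcal D\le I$ in $\mathcal L_r(L)$, which makes $\mathcal D$ a band projection there. Its range consists of those $T\in\mathcal Z(L)$ with $P_AT=T$, equivalently $|T|\le\lambda P_A$ for some $\lambda\ge 0$; any such $T$ annihilates $A^d$ and leaves $A$ invariant, and the restriction $T\mapsto T|_A$ gives the identification with $\mathcal Z(A)$. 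For (b), $\|\mathcal P(T)\|\le\|T\|$ is Voigt's theorem cited in the text, and $P_A$ is a contraction (being a band projection on $L$), so $\|\mathcal D(T)\|=\|P_A\mathcal P(T)\|\le\|\mathcal P(T)\|$.

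Part (c) is the heart of the matter, and I would argue by contradiction. Put $S:=\mathcal P(T)-\mathcal D(T)=(I-P_A)\mathcal P(T)$; then $S\ge 0$, $S\in\mathcal Z(L)$, $S$ is supported on the atomless band $A^d$, and $S\le\mathcal P(T)\le T$. Assume $S\neq 0$. Since $\mathcal Z(L)$ is a Dedekind complete Riesz space with unit $I$ (invoke Freudenthal's spectral theorem, or equivalently the Kakutani representation on the Stone space of the components of $I$), one obtains a nonzero band projection $P\le I-P_A$ and an $\epsilon>0$ with $S\ge\epsilon P$. The range of $P$ is a nonzero sub-band of the atomless $A^d$, hence atomless, so by iterated splitting it decomposes into a countably infinite family of pairwise disjoint nonzero sub-bands with corresponding band projections $Q_1,Q_2,\ldots\le P$. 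Choosing $y_k\ge 0$ in the range of $Q_k$ with $\|y_k\|=1$, one has
\begin{equation*}
Ty_k\ge Sy_k\ge\epsilon Py_k=\epsilon y_k,
\end{equation*}
hence $\|Ty_k\|\ge\epsilon$ for every $k$. But $(y_k)$ is a bounded pairwise disjoint sequence in a Banach lattice with order continuous norm, so $y_k\to 0$ weakly; compactness of $T$ then gives $Ty_k\to 0$ in norm, contradicting $\|Ty_k\|\ge\epsilon$.

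The main obstacle I anticipate is pinpointing the right mechanism that converts the nonvanishing of the atomless part of $\mathcal P(T)$ into a contradiction with compactness. The correct combination is a uniform lower bound $S\ge\epsilon P$ on an atomless band, an infinite disjoint splitting of that band, and the fact that bounded disjoint sequences in order continuous Banach lattices are weakly null. Once these ingredients are assembled the remainder is routine.
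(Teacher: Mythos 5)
Your treatment of (a) and (b) is fine and is essentially a cosmetic variant of what the paper does; the paper verifies (a) via block forms over the decomposition $L = A \oplus A^d$, you verify it by checking that $\mathcal D = P_A\mathcal P$ is a positive idempotent with $0 \le \mathcal D \le I$ in $\mathcal L_r(L)$, and both routes are correct. For (c) you take a genuinely different route: instead of invoking Schep's result (the paper cites \cite[Corollary 1.7]{Sc80} to kill the atomless central component dominated by a compact operator), you try to give a self-contained argument via disjoint sequences. This ambition is reasonable, but as written there is a real gap.

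The decisive step is the assertion that ``$(y_k)$ is a bounded pairwise disjoint sequence in a Banach lattice with order continuous norm, so $y_k\to 0$ weakly.'' That implication is false. In $L^1[0,1]$, which is atomless and has order continuous norm, take $y_k = 2^{k+1}\,\chi_{[2^{-k-1},\,2^{-k}]}$; these are pairwise disjoint, positive, of norm one, yet pairing with the constant function $1 \in L^\infty = (L^1)^*$ gives $\langle y_k, 1\rangle = 1$ for every $k$, so $(y_k)$ is not weakly null. Order continuity guarantees that \emph{order bounded} disjoint sequences are \emph{norm} null, but says nothing about weak nullity of merely norm-bounded disjoint sequences; that would require additionally that $L$ contain no lattice copy of $\ell^1$. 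Since your contradiction comes precisely from combining ``$y_k\to 0$ weakly'' with compactness of $T$, the argument does not close. A second, smaller issue: Section 3 of the paper only assumes $L$ Dedekind complete, not that the norm is order continuous, so even a corrected version of your argument would be proving (c) in less generality than stated (the paper's citation of Schep needs no order continuity). A repair in the order continuous setting is possible along your lines but must avoid weak convergence: from $Ty_k \ge \epsilon y_k$ and compactness pass to a norm-convergent subsequence $Ty_{k_j}\to z$; applying the band projections $Q_{k_j}$ onto the disjoint bands $B_{y_{k_j}}$ gives $\|Q_{k_j}Ty_{k_j}\|\ge\epsilon$, while $\|Q_{k_j}(Ty_{k_j}-z)\|\to 0$ and $\|Q_{k_j}z\|\to 0$ because the $Q_{k_j}z$ are disjoint and order bounded by $z$ and the norm is order continuous — a contradiction. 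That version is correct, but it uses order continuity exactly where your weak-nullity claim hid the gap.
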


\begin {proof}
For the proof of (a) just note that, with respect to the band decomposition $L=A \oplus A^d$, an operator $T\in \mathcal L_r(L)$, its modulus $|T|$ 
and the band projection $P_A$ have the block forms 
$$ 
T = \left[ \begin {array}{cc}
            T_1 & T_2 \\
            T_3 & T_4 \\
\end {array} \right]  
\ \ , \ \ \ \ 
|T| = \left[ \begin {array}{cc}
            |T_1| & |T_2| \\
            |T_3| & |T_4| \\
\end {array} \right] 
\ \ \ {\rm and} \ \ \ \ 
P_A = \left[ \begin {array}{cc}
            I  & 0 \\
            0 & 0 \\
\end {array} \right] .  
$$
To show (b), use Voigt's theorem \cite[Theorem 1.4]{Vo88} to obtain that 
$\|\mathcal D(T)\| = \|P_A\mathcal P(T)\|\leq \|\mathcal P(T)\| \le \|T\|$.

If $P_{A^d}$ is the band projection on $A^d$, then the operator 
$$ \mathcal P(T) - \mathcal D(T) = P_{A^d} \mathcal P(T) = P_{A^d} \mathcal P(T) P_{A^d} \in \mathcal Z(L) $$
is dominated by a positive compact operator $P_{A^d}TP_{A^d}$, and so it is equal to $0$
by \cite[Corollary 1.7]{Sc80}. This proves (c).
\end {proof}

\section{The ideal-triangularizability}

We first extend \cite[Theorem 3]{MKP} by showing the implication that remained unproven in \cite{MKP}. 
In the proof we will make use of the following extension of Ringrose's Theorem; see \cite[Theorem 2.1]{Drn3}.
Recall that if $T$ is a power-compact operator (i.e., some power of $T$ is a compact operator) on a Banach space $X$, 
then the {\it algebraic multiplicity} $m(T, \lambda)$ of a non-zero complex number $\lambda$ is the dimension of the subspace 
${\rm ker \,} ((\lambda - T)^k)$, where $k$ is the smallest natural number such that 
${\rm ker \,} ((\lambda - T)^k) = {\rm ker \,} ((\lambda - T)^{k+1})$. 
On the other hand, the {\it geometric multiplicity} of $\lambda$ is the dimension of the subspace  ${\rm ker \,} (\lambda - T)$.
We say that a chain $\cC$ of closed subspaces of $X$ is a {\it complete} chain if it contains arbitrary intersections and closed linear spans of its members. 
If a closed subspace $\cM$ is in a complete chain $\cC$, then the {\it predecessor}
$\cM_{-}$ of $\cM$ in $\cC$ is defined as the closed linear span of all proper subspaces of $\cM$ belonging to $\cC$.  

\begin{theorem}
\label{Ringrose} 
Let $T$ be a power-compact operator on a Banach space $X$, and let $\cC$ be a complete chain of closed subspaces
invariant under $T$.  Let $\cC^\prime$ be a subchain of $\cC$ of all subspaces $\cM \in \cC$ such that $\cM_{-} \neq \cM$.
For each $\cM \in \cC^\prime$, define $T_{\cM}$ to be the quotient operator
on $\cM / \cM_{-}$ induced by $T$. Then 
$$ \sigma(T) \setminus \{0\} =  \bigcup_{\cM \in \cC^\prime} \sigma(T_{\cM}) \setminus \{0\}. $$
Moreover, for each non-zero complex number $\lambda$ we have 
$$ m(T, \lambda) = \sum_{\cM \in \cC^\prime} m(T_{\cM}, \lambda)  . $$
\end{theorem}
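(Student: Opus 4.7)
The plan is to reduce everything to the finite-dimensional generalized eigenspace of $T$ at $\lambda$ by means of Riesz spectral projections adapted to the chain $\cC$. Since some power of $T$ is compact, $\sigma(T) \setminus \{0\}$ consists of isolated eigenvalues of finite algebraic multiplicity. My first step would be to verify two preliminary facts: for any closed $T$-invariant subspace $\cM$, the restriction $T|_{\cM}$ is again power-compact, so the Fredholm-index-zero property of $\lambda - T|_{\cM}$ for $\lambda \neq 0$ gives $\sigma(T|_{\cM}) \setminus \{0\} \subseteq \sigma(T) \setminus \{0\}$, together with the crucial invariance of the resolvent, $(z - T)^{-1} \cM \subseteq \cM$ for $z \notin \sigma(T) \cup \{0\}$. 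The same facts apply to the quotient $T_{\cM}$ on $\cM/\cM_{-}$.

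Next I would fix $\lambda \in \sigma(T) \setminus \{0\}$ and introduce the Riesz projection $E_{\lambda} = \frac{1}{2\pi i}\oint_{\Gamma}(z - T)^{-1}\,dz$, a finite-rank idempotent with range $\ker\bigl((\lambda - T)^{m(T,\lambda)}\bigr)$ of dimension $m(T,\lambda)$. The resolvent invariance above immediately forces $E_\lambda \cM \subseteq \cM$ for every $\cM \in \cC$, and in particular $E_\lambda$ leaves both $\cM$ and $\cM_{-}$ invariant whenever $\cM \in \cC'$. Therefore $E_\lambda$ descends to a projection on $\cM/\cM_{-}$ which, by the holomorphic functional calculus, is exactly the Riesz projection of $T_{\cM}$ at $\lambda$; its rank equals $m(T_{\cM},\lambda)$ (understood as $0$ when $\lambda \notin \sigma(T_{\cM})$). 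The key arithmetic point here is that $E_\lambda \cM \cap \cM_{-} = E_\lambda \cM_{-}$, which holds because any $x \in E_\lambda \cM \cap \cM_{-}$ satisfies $x = E_\lambda x \in E_\lambda \cM_{-}$; consequently the rank of the induced projection is precisely $\dim E_\lambda \cM - \dim E_\lambda \cM_{-}$.

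Finally, I would examine the chain $\{E_\lambda \cM : \cM \in \cC\}$ of subspaces of the finite-dimensional space $E_\lambda X$. Since a complete chain contains the empty intersection $X$ and the empty closed linear span $\{0\}$, this image chain ascends from $\{0\}$ up to $E_\lambda X$; because $E_\lambda X$ is finite-dimensional, only finitely many $\cM \in \cC'$ can give a strict inclusion $E_\lambda \cM_{-} \subsetneq E_\lambda \cM$, and the dimensions of these jumps sum to $\dim E_\lambda X = m(T,\lambda)$. A jump occurs precisely at those $\cM$ for which $\lambda \in \sigma(T_{\cM})$, and its size equals $m(T_{\cM},\lambda)$, which delivers both the spectral identity and the multiplicity formula at $\lambda$. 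The main technical obstacle I anticipate is the clean identification of the induced projection on $\cM/\cM_{-}$ with the Riesz projection of $T_{\cM}$; this reduces to checking that the resolvent of the quotient operator coincides with the induced action of $(z-T)^{-1}|_{\cM}$ on $\cM/\cM_{-}$ for $z$ near but distinct from $\lambda$, which in turn follows from the compatibility of resolvents under restriction to and quotient by invariant subspaces.
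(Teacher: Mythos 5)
Note first that the paper itself does not prove Theorem~\ref{Ringrose}: it is stated with a citation to \cite[Theorem 2.1]{Drn3}, so there is no internal proof against which to compare. Evaluated on its own merits, your Riesz-projection strategy is the standard one, and most of the outline is sound: the restriction (or quotient) of a power-compact operator is power-compact, its nonzero spectrum consists of eigenvalues, so $\sigma(T|_{\cM})\setminus\{0\}\subseteq\sigma(T)\setminus\{0\}$ and likewise for $T_{\cM}$; hence $(z-T)^{-1}$ leaves each $\cM$ invariant for $z\neq 0$, $z\notin\sigma(T)$, and so does $E_\lambda$; the induced idempotent on $\cM/\cM_-$ is the Riesz projection of $T_{\cM}$ at $\lambda$; and $E_\lambda\cM\cap\cM_-=E_\lambda\cM_-$ gives the rank formula $\dim E_\lambda\cM-\dim E_\lambda\cM_-$. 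The step you single out as the main technical obstacle (identifying the induced projection with the quotient's Riesz projection via resolvent compatibility) is, as you yourself anticipate, routine and not where the difficulty lies.

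The genuine gap is in the sentence ``the dimensions of these jumps sum to $\dim E_\lambda X = m(T,\lambda)$.'' The image chain $\{E_\lambda\cM:\cM\in\cC\}$ is indeed a finite chain $\{0\}=V_0\subsetneq V_1\subsetneq\cdots\subsetneq V_k=E_\lambda X$ whose dimension increments telescope to $m(T,\lambda)$, but you still need to show that each increment $V_{j-1}\subsetneq V_j$ is \emph{realized} by exactly one $\cM\in\cC'$, i.e.\ that there is a unique $\cM\in\cC'$ with $E_\lambda\cM_-=V_{j-1}$ and $E_\lambda\cM=V_j$. A priori the sum $\sum_{\cM\in\cC'}m(T_{\cM},\lambda)$ could be strictly smaller than $m(T,\lambda)$; in fact for a continuous nest, where $\cC'=\emptyset$, this realization step is precisely what forces $\sigma(T)\setminus\{0\}=\emptyset$ (Ringrose's classical conclusion that a compact operator with a continuous invariant nest is quasinilpotent). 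The realization does hold, but it uses the completeness of $\cC$ in an essential way beyond merely guaranteeing $\{0\},X\in\cC$: set $\mathcal{W}_j:=\bigcap\{\cM\in\cC: E_\lambda\cM\supseteq V_j\}$, which lies in $\cC$ because $\cC$ is closed under arbitrary intersections; using $E_\lambda\cM\subseteq\cM$ one checks $V_j\subseteq\mathcal{W}_j$, hence $E_\lambda\mathcal{W}_j=V_j$, then $E_\lambda(\mathcal{W}_j)_-=V_{j-1}$, so $\mathcal{W}_j\in\cC'$; and finally any $\cM\in\cC'$ with $E_\lambda\cM_-\subsetneq E_\lambda\cM$ must coincide with some $\mathcal{W}_j$, which also shows no $V_i$ is skipped or double-counted. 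This jump-realization argument, not the resolvent-compatibility check, is the real technical core that your sketch leaves unaddressed and in fact misidentifies as unproblematic.
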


The following theorem completes \cite[Theorem 3]{MKP}. It extends \cite[Theorem 3.8]{BMR}, where the implication $(c) \Rightarrow (a)$ 
has been proved for a positive compact operator on a Banach lattice $l^p$ ($1<p<\infty$). It is also a generalization of 
\cite[Corollary 27]{MR05}, where positive trace-class operators acting on $L^p(\mu)$ ($1 \le p < \infty$) are considered.
A {\it diagonal entry} of an operator $T$ on a Banach lattice $L$ is the number $\varphi_a(T a)$, where $\varphi_a$ 
is the linear functional associated to an atom $a \in L^+$.

\begin{theorem} 
\label{complete_decomposability2}
Let $T$ be a positive power-compact operator on a Banach lattice $L$ with order continuous norm. The following conditions are mutually equivalent: 
\begin{enumerate}
\item [(a)] $T$ is ideal-triangularizable;
\item [(b)] $T - \cD(T)$ is quasinilpotent;
\item [(c)] The diagonal entries of $T$ consists precisely of eigenvalues (except maybe zero) of the operator $T$ repeated according to their algebraic multiplicities. 
 \end {enumerate}
\end{theorem}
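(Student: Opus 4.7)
The plan is to prove the cycle (a) $\Rightarrow$ (b) $\Rightarrow$ (c) $\Rightarrow$ (a), the first two implications being essentially spectral computations built on Theorem \ref{Ringrose} and the structure of $\cD(T)$, while the last is the substantive new content.

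For (a) $\Rightarrow$ (b), take an ideal-triangularizing chain $\cC$ for $T$. Order continuity of the norm forces every member of $\cC$ to be a band, hence invariant under each atomic band projection $P_a$ and consequently under $\cD(T)$; so $\cC$ also triangularizes $T - \cD(T)$. The key structural observation is that maximality of $\cC$ as a chain of closed ideals, together with order continuity, forces every nontrivial gap $\mathcal{M}/\mathcal{M}_{-}$ to be band-irreducible and hence one-dimensional of the form $\mathbb{R} a$ for an atom $a$ of $L$, via the band decomposition $\mathcal{M} = \mathcal{M}_{-} \oplus (\mathcal{M} \cap \mathcal{M}_{-}^{d})$. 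On each such gap both $T$ and $\cD(T)$ act as multiplication by $\varphi_a(Ta)$, so the induced operator $(T - \cD(T))_{\mathcal{M}}$ vanishes; applying Theorem \ref{Ringrose} to the power-compact operator $T - \cD(T)$ then gives $\sigma(T - \cD(T)) \setminus \{0\} = \emptyset$.

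For (b) $\Rightarrow$ (c), note that $\cD(T)$ is itself ideal-triangularizable by a maximal chain of bands compatible with the atomic decomposition $L = A \oplus A^{d}$, acting on each one-dimensional atomic gap as multiplication by the diagonal entry $\varphi_a(Ta)$; Theorem \ref{Ringrose} thus identifies the nonzero spectrum of $\cD(T)$, with algebraic multiplicities, as the multiset of nonzero diagonal entries of $T$. Since $T - \cD(T) \ge 0$ is power-compact and quasinilpotent with vanishing atomic diagonal, a positive Perron-Frobenius-type argument, combined with induction along the atomic chain triangularizing $\cD(T)$, shows that this quasinilpotent perturbation does not alter the nonzero spectrum with algebraic multiplicities, so $\sigma(T) \setminus \{0\} = \sigma(\cD(T)) \setminus \{0\}$, which is precisely (c).

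The main obstacle is (c) $\Rightarrow$ (a), the direction extending \cite[Theorem 3.8]{BMR} and \cite[Corollary 27]{MR05}. I would apply the Ideal-triangularization Lemma \ref{trikotljivostna lema}: it suffices to prove ideal-reducibility of $T$ whenever $\dim L > 1$, under the assumption that condition (c) is inherited by passage to ideal-quotients. If $L$ has no atom then $\cD(T) = 0$ and (c) forces $T$ to be quasinilpotent, in which case the positivity and power-compactness of $T$ allow a classical Lomonosov- or de Pagter-type argument to produce a nontrivial invariant closed ideal. If $L$ has an atom $a$ with $\lambda := \varphi_a(Ta)$ nonzero, then by (c) $\lambda$ is an eigenvalue of $T$, and I would consider the Riesz spectral projection at $\lambda$: this is a finite-rank idempotent whose positivity, combined with the atomic band projections entering Section 2, permits the structural decomposition of Proposition \ref{o idempotentu 3}, producing rank-one band components on pairwise disjoint bands, any of which supplies a nontrivial $T$-invariant closed ideal. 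The most delicate verification, and where Theorem \ref{Ringrose} is indispensable, is that condition (c) really is inherited by ideal-quotients, because that hypothesis is precisely the engine that tracks the splitting of eigenvalues and their algebraic multiplicities across the gaps of an invariant chain.
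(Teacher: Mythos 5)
Your (a) $\Rightarrow$ (b) sketch is essentially sound (the paper delegates this and (b) $\Leftrightarrow$ (a), (a) $\Rightarrow$ (c) to \cite{MKP} and proves only (c) $\Rightarrow$ (a)), but your (b) $\Rightarrow$ (c) argument is not: it is simply false that a quasinilpotent perturbation ``does not alter the nonzero spectrum with algebraic multiplicities.'' That would need $T - \cD(T)$ to be \emph{strictly triangular with respect to a chain that also triangularizes $\cD(T)$}, which is not yet available at that point. The correct route in the literature goes (b) $\Rightarrow$ (a) $\Rightarrow$ (c), using that $\cD(T)$ is central and hence leaves every closed ideal invariant.

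The serious problems are in the one direction the paper actually has to prove, (c) $\Rightarrow$ (a). Two distinct gaps:

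(i) You assert the Riesz spectral projection $P_\lambda$ at a nonzero diagonal entry $\lambda$ is a positive finite-rank idempotent. It is not in general. Positivity of the Riesz projection is a Krein--Rutman-type phenomenon tied to the \emph{spectral radius} of an \emph{ideal-irreducible} operator; for an arbitrary eigenvalue of an arbitrary (possibly reducible) positive operator $P_\lambda$ can have genuinely non-positive entries even in $3\times 3$ nonnegative matrices. Without positivity, Proposition \ref{o idempotentu 3} is unavailable. Moreover, even granting positivity, the bands produced by Proposition \ref{o idempotentu 3} are $P_\lambda$-invariant, not $T$-invariant, so they do not ``supply a nontrivial $T$-invariant closed ideal'' without further work.

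(ii) You plan to invoke the Ideal-triangularization Lemma, which requires that condition (c) be inherited by ideal-quotients. You flag this as the delicate step but do not attempt it, and in fact it cannot be verified directly: it is essentially equivalent to the theorem itself. The point is that Ringrose's theorem splits the \emph{multiset of nonzero eigenvalues with multiplicity} across a chain of invariant ideals, and it is a priori completely unclear that this splitting is compatible with how the \emph{diagonal entries} split across atoms in $J$ versus atoms in $J^d$. Trying to prove heritability of (c) head-on is circular.

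The paper avoids both pitfalls by not passing through the Ideal-triangularization Lemma at all for (c) $\Rightarrow$ (a). Instead it fixes any maximal chain $\mathcal C$ of $T$-invariant closed ideals and shows directly that it is already ideal-triangularizing: if some gap $J/J_-$ has dimension $\geq 2$, take $\lambda$ maximal among diagonal entries of compressions $T_J$ onto all such gaps; a counting argument using Ringrose and (c) forces $\lambda = r(T_J)$, and then \cite[Corollary 1]{MKP} (a Perron--Frobenius-type statement: a positive power-compact ideal-irreducible operator cannot have its spectral radius among its diagonal entries) contradicts the ideal-irreducibility of $T_J$ that follows from maximality of $\mathcal C$. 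This global extremal argument is what replaces both the Riesz projection and the heritability verification in your sketch, and it is the genuine content of the proof.
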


\begin {proof}
It suffices to prove that (c) implies (a), since other implications were proved in \cite{MKP}.
Let $\mathcal C$ be a maximal chain of closed ideals invariant under the operator $T$. We will prove that, in fact, $\mathcal C$
is an ideal-triangularizing chain for $T$. Maximality of $\mathcal C$ implies that $\{0\}$ and $L$ are elements of $\mathcal C$, 
and that $\mathcal C$ is a complete chain.
 Therefore, we only need to prove that for every $J \in \mathcal C$  the dimension of the quotient space $J/J_-$ is less than or equal to one. 
Maximality of $\mathcal C$ also implies that the induced operator $T_{J}$ is ideal-irreducible on $J/J_-$, and so 
it is not quasinilpotent by \cite[Theorem 1.3]{DK} provided $\dim(J/J_-) \geq 2$.  Note that $T_J$ can be identified 
with the compression of $T$ to the band  $J \cap J_-^d$.

Let $\mathcal B$ be the set of all ideals $J$ in $\mathcal C$ with $\dim(J/J_-) \geq 2$. Assume that $\mathcal B$ is not empty.
Let $\lambda$ be the largest number among the diagonal entries of the compressions of the operator $T$ onto $J\cap J_-^d$, 
where $J$ runs over $\mathcal B$. Pick any ideal $J \in \mathcal B$ such that $\lambda$ is the diagonal entry 
of the compression of the operator $T$ onto $J\cap J_-^d$. We claim that $\lambda=r(T_{J}).$ Otherwise we would have $\lambda<r(T_{J})$, 
and so the number of appearances of $r(T_{J})$ as a diagonal entry of the compression of $T$ onto some one-dimensional quotient space 
(induced by an ideal in $\mathcal C$) would be precisely the algebraic multiplicity of $r(T_{J})$.
In view of Theorem \ref{Ringrose} this would imply that $r(T_{J})$ is not in the spectrum of $T_{J}$ which is absurd. 
So, $\lambda=r(T_{J})$ as claimed.

Therefore, the spectral radius $r(T_J)$ appears on the diagonal of the compression of $T$ onto $J \cap J_-^d$.
By \cite[Corollary 1]{MKP}, this compression is ideal-reducible, contradicting the maximality of the chain $\mathcal C$. 
This implies that $\mathcal B$ is an empty set, and so $\mathcal C$ is an ideal-triangularizing chain for the operator $T$. 
\end {proof}

Now we turn our attention to multiplicative semigroups of ideal-triangularizable positive operators. We first recall \cite[Theorem 4.5]{Drn1}. 

\begin{theorem}
\label{quasinilpotent}
Every semigroup of quasinilpotent positive compact operators on a Banach lattice is ideal-triangularizable.
\end{theorem}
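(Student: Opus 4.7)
The plan is to apply the Ideal-Triangularization Lemma (Lemma \ref{trikotljivostna lema}), reducing the theorem to showing two things: the property ``semigroup of quasinilpotent positive compact operators'' is inherited by ideal-quotients, and every such semigroup on a Banach lattice of dimension at least two is ideal-reducible.

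The inheritance is routine. If $I\subseteq J$ are common invariant closed ideals and $\widehat T$ is the operator on $J/I$ induced by $T\in \cS$, then $\widehat T$ identifies with the band compression $P_B T|_B$ to $B=J\cap I^d$. Positivity, compactness, and the semigroup property all pass to such compressions. For quasinilpotence I would apply Theorem \ref{Ringrose} to the chain $\{0\}\subseteq I\subseteq J\subseteq L$: its conclusion forces $\sigma(\widehat T)\setminus \{0\} \subseteq \sigma(T)\setminus \{0\} = \emptyset$, so $\widehat T$ is quasinilpotent.

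For ideal-reducibility on a Banach lattice of dimension at least two, the single-operator case is De Pagter's theorem: any non-zero positive compact quasinilpotent operator admits a nontrivial closed invariant ideal. To promote this to a common invariant ideal for the whole semigroup $\cS$, I would argue by contradiction via Proposition \ref{o razcepnosti}(c). Supposing $\cS$ ideal-irreducible, we would have $A\cS B \neq \{0\}$ for every pair of non-zero positive operators $A, B$ on $L$. Combining this transitivity with a Turovskii-style analysis of the norm-closed semigroup generated by $\cS$ together with positive rank-one operators $x\otimes \varphi$ should produce, inside this enlarged semigroup, a non-zero compact operator whose spectral radius is forced to be strictly positive --- contradicting the quasinilpotence of every member of $\cS$ (and of their compositions, which remain in $\cS$).

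The decisive obstacle is this final step: manufacturing, from the transitivity afforded by Proposition \ref{o razcepnosti} alone, a compact operator of strictly positive spectral radius inside the (enlarged) semigroup. I expect this to require a careful Turovskii--Lomonosov type argument adapted to the positive setting, balancing spectral-radius estimates against norm growth under composition, with the positivity used to ensure that the constructed element is indeed positive and compact. Once such a contradiction is secured, combining it with the Ideal-Triangularization Lemma yields the ideal-triangularizability of $\cS$.
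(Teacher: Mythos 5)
The paper itself offers no proof of this theorem: it is quoted verbatim as \cite[Theorem 4.5]{Drn1} (``We first recall \dots''), so there is no argument in the present text to compare yours against. What can be said is that your framework --- reduce via the Ideal-Triangularization Lemma to (i) inheritance of the hypotheses by ideal-quotients and (ii) ideal-reducibility of any such semigroup on a Banach lattice of dimension at least two --- is indeed the strategy behind the cited result, and your handling of (i) is fine. (In fact you do not even need the full strength of Theorem~\ref{Ringrose} there: for a compact operator $T$ on a Banach space with an invariant closed subspace $M$, one already has $\sigma(T)\setminus\{0\}=(\sigma(T|_M)\cup\sigma(\widehat T))\setminus\{0\}$, which hands you quasinilpotence of compressions immediately.)

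The difficulty is that step (ii) is not a step in your write-up --- it is a placeholder. You correctly observe that de~Pagter's theorem gives a nontrivial invariant closed ideal for a \emph{single} nonzero quasinilpotent positive compact operator, but you then say only that one ``should'' be able to promote this to a common invariant ideal for the whole semigroup by ``a careful Turovskii--Lomonosov type argument adapted to the positive setting,'' and you explicitly flag this as the decisive obstacle. That obstacle is the entire content of the theorem. Passing from single-operator reducibility to semigroup reducibility is precisely where the work lies: one needs a genuine positive analogue of Turovskii's spectral-radius estimates (essentially what \cite{Drn1} establishes before stating Theorem~4.5), showing that ideal-irreducibility of $\cS$ would force some element of a suitable closure of $\cS$ to have strictly positive spectral radius, and verifying that the closure operations preserve positivity, compactness, and membership in the relevant semigroup ideal. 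Gesturing at Proposition~\ref{o razcepnosti}(c) and transitivity does not produce such an element; the inequality $A\cS B\neq\{0\}$ for all nonzero positive $A,B$ by itself yields no spectral-radius bound. As written, the proposal is a correct reduction together with an unproved assertion at its core, not a proof.
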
 

As a consequence we obtain the following result.

\begin{theorem} 
\label{atomless}
Let $L$ be an atomless Banach lattice with order continuous norm. If $\cS$ is a semigroup of ideal-triangularizable 
positive compact operators on $L$, then it is ideal-triangularizable. 
\end{theorem}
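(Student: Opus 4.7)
The plan is to reduce Theorem \ref{atomless} to the known case of semigroups of quasinilpotent positive compact operators, namely Theorem \ref{quasinilpotent}. The key observation is that atomlessness forces the atomic diagonal to vanish identically, and once the atomic diagonal is zero, ideal-triangularizability of a single positive compact operator already forces quasinilpotence.

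More precisely, since $L$ is atomless, the band $A$ generated by the atoms of $L$ is the trivial band $\{0\}$, so the band projection $P_A$ onto $A$ is the zero operator. By the definition $\mathcal{D}(T)=P_A\mathcal{P}(T)$ introduced before Proposition \ref{zvez diag}, this gives $\mathcal{D}(T)=0$ for every $T\in\mathcal{L}_r(L)$. Now fix an arbitrary $T\in\cS$. By hypothesis $T$ is an ideal-triangularizable positive compact (hence power-compact) operator, so Theorem \ref{complete_decomposability2} applies and yields that $T-\mathcal{D}(T)$ is quasinilpotent. Combined with $\mathcal{D}(T)=0$ this shows that $T$ itself is quasinilpotent.

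Hence every member of $\cS$ is a quasinilpotent positive compact operator, and Theorem \ref{quasinilpotent} immediately produces an ideal-triangularizing chain for $\cS$.

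I do not expect a genuine obstacle here; the real content has been packaged into the earlier results. The only thing worth double-checking is that the implication (a)$\Rightarrow$(b) of Theorem \ref{complete_decomposability2} (taken from \cite{MKP}) is applicable to each individual member of $\cS$, which it is since each member is separately assumed ideal-triangularizable and positive compact. After that the argument is a one-line application of Theorem \ref{quasinilpotent}.
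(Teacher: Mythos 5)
Your proof is correct. The overall strategy coincides with the paper's: show that every member of $\cS$ is quasinilpotent, then invoke Theorem~\ref{quasinilpotent}. Where you differ is in how quasinilpotence is established. The paper simply cites \cite[Proposition~4.5]{DKan}, which says directly that an ideal-triangularizable positive compact operator on an atomless Banach lattice with order continuous norm is quasinilpotent. You instead observe that $A=\{0\}$ forces $P_A=0$ and hence $\cD\equiv 0$, and then feed this into the implication (a)$\Rightarrow$(b) of Theorem~\ref{complete_decomposability2} (which in turn rests on \cite{MKP}). Your derivation has the minor advantage of being expressed entirely in terms of the atomic-diagonal machinery the paper is building, making the atomless case visibly a degenerate instance of the main theorem; the paper's version is shorter because it outsources the step to an already-published result. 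Both are sound, and the choice between them is largely one of exposition.
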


\begin{proof}
By \cite[Proposition 4.5]{DKan}, every operator in $\cS$ is quasinilpotent, and so Theorem \ref{quasinilpotent} can be applied.
\end{proof}

Theorems \ref{quasinilpotent} and \ref{atomless} do not hold without the assumption that the operators are compact. 
Namely, in \cite{DK02} there was constructed an irreducible semigroup of square-zero positive operators on 
$L^p[0,1)$ $(1\leq p<\infty)$ in which any finite number of elements generate an ideal-triangularizable semigroup.
So, one may seek for extensions of Theorem \ref{atomless} by relaxing the assumption that $L$ is atomless.
It will turn out that  in this case the atomic diagonal operator plays important role.

Let $L$ be a Banach lattice with order continuous norm, and let $\cS$ be a semigroup of positive operators on $L$.
If $\cS$ is ideal-triangularizable, then $\cD(S T) = \cD(S) \cD(T)$ for every pair $\{S, T\} \subseteq \cS$, as 
was shown in the proof of \cite[Proposition 3]{MKP}. 
It follows that $\cD(S T) = \cD(T S)$ or equivalently $\cD(S T - T S) = 0$ for every pair $\{S, T\} \subseteq \cS$.
The following main theorem of the paper treats the question when the converse implication holds. 
Note also that it is a generalization of Theorem \ref{atomless}.

\begin{theorem} 
\label{semigroup1}
Let $\cS$ be a semigroup of ideal-triangularizable positive compact operators on a Banach lattice $L$ with order continuous norm  
such that $\cD(S T) = \cD(T S)$ for every pair $\{S, T\} \subseteq \cS$. Then the semigroup $\cS$ is ideal-triangularizable. 
\end{theorem}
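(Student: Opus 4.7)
My plan is to invoke the Ideal-triangularization Lemma (Lemma \ref{trikotljivostna lema}) to reduce the theorem to an ideal-reducibility statement, and then to combine Theorem \ref{atomless}, Theorem \ref{quasinilpotent}, and the structural content of Theorem \ref{complete_decomposability2} via a Perron-type eigenfunctional argument.

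First I verify that the hypothesis is inherited by ideal-quotients. For $\cS$-invariant closed ideals $I \subseteq J$ of $L$, the induced operators on $J/I$ identify with the compressions $P_B T|_B$ to the band $B := J \cap I^d$, which is a Banach lattice with order continuous norm on which these compressions remain positive, compact, and ideal-triangularizable. The atoms of $B$ are exactly the atoms of $L$ lying in $B$, and for such an atom $c$ the functional $\varphi_c$ annihilates $I$ and factors through $P_B$, giving $\varphi_c(STc) = \varphi_c(\hat S \hat T c)$, so that the identity $\cD(ST) = \cD(TS)$ on $L$ transfers to the analogous identity for the induced semigroup on $B$. Lemma \ref{trikotljivostna lema} then reduces the theorem to showing that every such $\cS$ is ideal-reducible when $\dim L > 1$.

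If $L$ is atomless, Theorem \ref{atomless} applies. Otherwise $L$ has an atom; to derive a contradiction, assume $\cS$ is ideal-irreducible. If every element of $\cS$ is quasinilpotent, Theorem \ref{quasinilpotent} directly contradicts ideal-irreducibility. Otherwise pick $T_0 \in \cS$ with $r(T_0) > 0$; by Theorem \ref{complete_decomposability2} there is an atom $b$ with $\varphi_b(T_0 b) = r(T_0)$. An induction on $n$ using $T_0 b \geq r(T_0) b$ together with Theorem \ref{complete_decomposability2} applied to $T_0^n \in \cS$ forces $\varphi_b(T_0^n b) = r(T_0)^n$, so that $z := T_0 b - r(T_0) b$ and all $T_0^n z$ lie in $B_b^d$. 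The plan now is to construct a positive functional $\psi$ on $L$ with $T_0^{*} \psi = r(T_0)\psi$ and $\psi(b) = 1$ (for instance as the image $E^{*} \varphi_b$ of $\varphi_b$ under the spectral / mean-ergodic projection onto the $r(T_0)$-eigenspace of $T_0^{*}$), and to use the cyclic identity $\varphi_b(T_0^n S T_0 b) = \varphi_b(T_0^{n+1} S b)$ inherited from the hypothesis to deduce $\psi(S T_0 b) = r(T_0)\,\psi(Sb)$, whence $\psi(S z) = 0$ for every $S \in \cS$. The closed $\cS$-invariant ideal
\[
M := \{x \in L : \psi(S|x|) = 0 \text{ for all } S \in \cS\}
\]
then contains $z$ (since $|z| = z$) but not $b$ (since $\psi(T_0 b) = r(T_0) \neq 0$), giving, in the case $z \neq 0$, a nontrivial proper $\cS$-invariant closed ideal, contradicting ideal-irreducibility.

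The main obstacle lies in two places. The first is that a naive weak-$\ast$ cluster-point construction of the Perron functional $\psi$ need not succeed when $r(T_0)$ is not a simple pole of $T_0$, since $r(T_0)^{-n} \|T_0^{*n} \varphi_b\|$ can grow polynomially; one must instead combine Krein--Rutman existence of positive eigenfunctionals with the specific identity $\varphi_b(T_0^n b) = r(T_0)^n$ (or a Cesaro averaging argument) to guarantee both the normalization $\psi(b) = 1$ and that the cyclic identity passes to the limit. The second is the degenerate subcase $z = 0$, in which $b$ is already an eigenvector of $T_0$ and the above construction produces no nonzero element of $M$; here I would select $S_0 \in \cS$ with $S_0 b \notin \mathbb R b$ (which exists because the $T_0$-invariant one-dimensional band $B_b$ cannot be $\cS$-invariant) and work with $z_{S_0} := S_0 b - \varphi_b(S_0 b) b \in B_b^d$, using the cyclic identity $\varphi_b(T_0 z_{S_0}) = \varphi_b(S_0 z) = 0$ as the starting point for a parallel construction.
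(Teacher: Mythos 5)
Your proposal takes a genuinely different route from the paper, but it has gaps at exactly the two points you flag as obstacles, and I do not believe either one can be closed without substantial new ideas that effectively recreate the paper's argument.

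The paper never constructs a Perron eigenfunctional. Instead, after reducing to a norm-closed semigroup and picking $A\in\cS$ with $r(A)=1$, $A\neq I$, it splits on whether the peripheral eigenvalue $1$ is semisimple. If it is, $A^k$ converges in $\cS$ to a finite-rank positive idempotent $E\neq I$; the structure theorem for such idempotents (Proposition~\ref{o idempotentu 3}) puts $E$ in a $3\times 3$ block form with an identity in the middle block, and the commuting-diagonal hypothesis plus a trace computation yields an atom $a$ and a nonzero vector $f$ with $\varphi_a(\cS f)=\{0\}$, contradicting Proposition~\ref{o razcepnosti}. If $1$ is not semisimple, $A^m/\binom{m}{k-1}$ converges in $\cS$ to a nonzero square-zero $M$, and the diagonal hypothesis forces the semigroup ideal generated by $M$ to consist of quasinilpotents, again contradicting ideal-irreducibility via Theorem~\ref{quasinilpotent}. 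Both cases hinge on producing a new element of $\cS$ as a limit and exploiting its block structure, not on a spectral functional.

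Regarding your plan, the construction of $\psi$ is the first real gap. Your own observation $\varphi_b(T_0^n b)=r(T_0)^n$ only controls the evaluation at $b$; it does not make the sequence $r(T_0)^{-n}T_0^{*n}\varphi_b$ bounded in $L^*$, and when $r(T_0)$ is a pole of order $k\ge 2$ this sequence can have norm growing like $n^{k-1}$ while still evaluating to $1$ at $b$, so weak-$*$ cluster points vanish at $b$. Cesaro averaging does not help for the same reason, and Krein--Rutman gives existence of some positive eigenfunctional of $T_0^*$ but provides no control over whether it is strictly positive at the particular atom $b$ you chose (indeed, for $T_0=\left[\begin{smallmatrix}1&0\\ 1&1\end{smallmatrix}\right]$ and $b=e_2$ the unique positive eigenfunctional is $e_1^T$, which kills $b$). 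Even if you had $\psi$, the passage from the cyclic identity to $\psi(ST_0 b)=r(T_0)\psi(Sb)$ needs the limiting process to commute with the index shift, which again presupposes the unavailable boundedness.

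The second gap is the degenerate case $z=0$, and here the proposed patch is circular. With $T_0 b=r(T_0)b$ one gets $\psi(Sb)=\varphi_b(Sb)$ and $\psi(SS_0 b)=\varphi_b(SS_0 b)$ for any reasonable $\psi$ built from the orbit $\{T_0^{*n}\varphi_b\}$, so $\psi(S z_{S_0})=\varphi_b(SS_0b)-\varphi_b(Sb)\varphi_b(S_0b)$. Vanishing of this quantity is precisely the multiplicativity of the coordinate functional $\varphi_b$ on $\cS$, which is essentially equivalent to ideal-triangularizability of $\cS$ (cf. the discussion preceding Theorem~\ref{semigroup1} and \cite[Proposition~3]{MKP}); you would be assuming the conclusion. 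The hypothesis only gives $\varphi_b(SS_0b)=\varphi_b(S_0Sb)$, which is symmetry, not multiplicativity. So this branch is not merely incomplete but cannot be finished along the lines sketched. The inheritance check, the reduction to ideal-reducibility via Lemma~\ref{trikotljivostna lema}, the dispatch of the atomless and all-quasinilpotent cases, and the identity $\varphi_b(T_0^nb)=r(T_0)^n$ are all correct and would survive, but the core of the argument needs to be replaced by something like the paper's idempotent/square-zero dichotomy.
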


\begin{proof}
With no loss of generality we may assume that $\cS$ is closed under positive scalar multiplication.
Let us prove that the assumptions are satisfied for the closure of $\cS$, so that we may also assume 
that $\cS$ is a closed set. To this end, let a sequence $\{S_k\}_k \subseteq \cS$ converge to a positive compact operator $S$.
By \cite[Theorem 1.6]{Sc80}, $\cD(S_k)$ is a positive compact operator for each $k$.
Since the diagonal operator $\cD$ is continuous by Theorem \ref{zvez diag}, the sequence
$\{\cD(S_k)\}_k$ converges to the operator $\cD(S)$,  so  that the sequence 
$\{S_k-\cD (S_k)\}_k$ converges to the operator $S-\cD (S)$.
Since the operators $S_k-\cD (S_k)$ are all quasinilpotent by Theorem \ref{complete_decomposability2}, the compact operator 
$S-\cD(S)$ is quasinilpotent as well by \cite[Corollary 7.2.11.]{RaRo}. Thus, Theorem \ref{complete_decomposability2} implies that $S$ is ideal-triangularizable. Since $\cD(S T) = \lim_{k \rightarrow \infty} \cD(S_k T) = \lim_{k \rightarrow \infty} \cD(T S_k) = \cD(T S)$ 
for every $T \in \cS$, we conclude (because of symmetry) that $\cD(S T) = \cD(T S)$ for every pair $\{S, T\}$ in the closure of $\cS$.
Therefore, we have shown that there is no loss of generality in assuming that $\cS$ is a closed set.

We first show that $\mathcal S$ is ideal-reducible. Assume otherwise. It follows from Theorem \ref{quasinilpotent}
that any semigroup consisting of quasinilpotent positive compact operators
and multiples of the identity operator is ideal-triangularizable. 
Therefore, there exists an operator $A \in \cS$ such that $r(A)=1$ and $A$ is not the identity operator $I$. 
Since $A$ is ideal-triangularizable, \cite[Proposition 4]{MKP} implies that its spectrum $\sigma(A)$ is contained in $[0,1]$.
We distinguish two cases: 

{\bf Case 1}: The geometric multiplicity of the eigenvalue $1 \in \sigma(A)$ is equal to its algebraic multiplicity.
Then the sequence $\{A^k\}_k$ converges to a nonnegative idempotent $E \in \cS$ of finite rank satisfying $E \neq I$. 
Since $E$ is ideal-triangularizable, it follows from Proposition \ref{o idempotentu 3} that there exist bands $B_1$, $B_2$ and $B_3$ such that,
with respect to the decomposition $L = B_1 \oplus B_2 \oplus B_3$, $E$ has the block-triangular form 
$$ E = \left[ \begin {array}{ccc}
            0 & X & X Y \\
            0 & I & Y \\
            0 & 0 & 0 \\
\end {array}  \right] , 
$$
where only one of the bands $B_1$ and $B_3$ may be equal to zero, and the dimension $n = \dim B_2$ is finite, so that 
$B_2$ is atomic and isomorphic to $\mathbb R^n$ by \cite[Corollary 1, p.70]{Sch}. 

Assume first that $B_1 = \{0\}$ and $Y = 0$, so that $E$ has the form
$$ E =  \left[ \begin {array}{cc}
            I & 0 \\
            0 & 0 \\
\end {array} \right].  $$
Let 
$$ 
S = \left[ \begin {array}{cc}
            S_1 & S_2 \\
            S_3 & S_4 \\
\end {array} \right] \in \cS  
\ \ \ {\rm and} \ \ \ \ 
T = \left[ \begin {array}{cc}
            T_1 & T_2 \\
            T_3 & T_4 \\
\end {array} \right]   \in \cS
$$
be arbitrary operators in $\cS$. The operators $ES \cdot TE$ and $TE\cdot ES$ are both in $\mathcal S$, and they have the same diagonal. From 
$$  
E S \cdot T E = \left[ \begin {array}{cc}
            S_1 T_1 + S_2 T_3 & 0 \\
                 0  &  0  \\
\end {array} \right]
\ \ \ {\rm and} \ \ \ \ 
T E \cdot E S = \left[ \begin {array}{cc}
            T_1 S_1 & T_1 S_2 \\
            T_3 S_1 & T_3 S_2 \\
\end {array} \right]
$$
we conclude that 
$$ \cD( S_1 T_1) + \cD(S_2 T_3)= \cD(T_1S_1). $$
Since 
$$ \mathrm{tr}(\cD(S_1T_1))=\mathrm{tr}(S_1T_1)=\mathrm{tr}(T_1S_1)=\mathrm{tr}(\cD(T_1S_1)) , $$
we obtain that $\mathrm{tr}(\cD(S_2 T_3)) = 0$, so that $\cD(S_2 T_3) = 0$. 
Since $\mathcal S$ is ideal-irreducible, we can choose an atom $a \in B_2$ and an operator $T \in \mathcal S$ such that 
$f = T_3 a$ is a nonzero positive vector in $B_3$. If $\varphi_a$ is the linear functional associated to the atom $a$,
then $\varphi_a(Sf) = \varphi_a(S_2T_3a)+ \varphi_a(S_4T_3a) = 0$, as $\cD(S_2T_3)=0$ and $S_4T_3a \perp a$.
This is a contradiction with Proposition \ref{o razcepnosti}.

The case when $B_3 = \{0\}$ and $X=0$ can be handled similarly. Hence, it remains to consider 
the case when $X$ and $Y$ are not both zero. We consider only the case $X \neq 0$, since the case $Y \neq 0$ is similar.
Let
$$S = \left[ \begin {array}{ccc}
            S_1 & S_2 & S_3 \\
            S_4 & S_5 & S_6 \\
            S_7 & S_8 & S_9 \\
\end {array} \right]  \in \cS
$$ be an arbitrary operator. 
Then 
$$E S= \left[ \begin {array}{ccc}
            XS_4+XYS_7 &  * &  * \\
            * & S_5+YS_8 & * \\
            0 &  0 &  0 \\
\end {array} \right] $$
and
$$S E= \left[ \begin {array}{ccc}
            0 & * & *\\
            0 & S_4X+S_5 & * \\
            0 & * & S_7XY+S_8Y \\
\end {array} \right].$$
Since the operator $E S$ is ideal-triangularizable, its $(1,1)$ block $XS_4+XYS_7$ is ideal-triangularizable as well, and so is the operator $XS_4$.
The equality $\cD(SE)=\cD(ES)$ implies that $\cD(XS_4)=0$, and so the operator $XS_4$ is quasinilpotent by Theorem \ref{complete_decomposability2}.
Since $\sigma(XS_4)\backslash\{0\}=\sigma(S_4X)\backslash\{0\}$, $S_4X$ is a nilpotent operator on a finite-dimensional Banach lattice $B_2$.
Since the operator $S E$ is ideal-triangularizable, its $(2,2)$ block $S_4 X+S_5$ is ideal-triangularizable as well, and so is $S_4X$.
It follows that $\cD(S_4X)=0$. Since $X \neq 0$, there is an atom $a \in B_2$ such that $f = X a$ is a nonzero vector in $B_1$.
The vectors $S_1f$ and $S_7f$ are disjoint with the atom $a$, and so 
$\varphi_a(S f) =  \varphi_a(S_4 f) = \varphi_a(S_4Xa)= 0$, as $\cD(S_4 X)=0$. This is again a contradiction with Proposition \ref{o razcepnosti}.

{\bf Case 2}: The geometric multiplicity of the eigenvalue $1 \in \sigma(A)$ is smaller than its algebraic multiplicity.
Then, by the Riesz decomposition theorem, the Banach space $L$ can be decomposed into a direct sum of a finite-dimensional subspace $L_1$ and 
a (possibly zero) closed subspace $L_2$ such that $A$ has the block-diagonal form
$$ 
A = \left[ \begin {array}{cc}
            I + N & 0 \\
               0  & C \\
\end {array} \right], 
$$
where $N$ is a nilpotent operator on $L_1$ of the nilpotency index $k \geq 2$, and $r(C) < 1$ provided $L_2$ is nonzero. Now
$$ \lim_{m \rightarrow \infty} \frac{A^m}{{m \choose k-1}} = 
\left[ \begin {array}{cc}
             N^{k-1} & 0 \\
               0    &  0 \\
\end {array} \right] \in \cS . $$
This proves that the semigroup $\cS$ contains a square-zero operator $M \neq 0$. 
Let $\mathcal N(M)$ and $\mathcal R(M)$ denote the absolute kernel and the range ideal of the operator $M$, respectively. 
We claim that $\mathcal R(M)\subseteq \mathcal N(M).$
If $y$ is a vector in $\mathcal R(M)$, then there exist positive vectors $x_1,\ldots, x_n\in E$ and positive scalars 
$\lambda_1,\ldots, \lambda_n$ such that 
$$ |y|\leq \sum_{j=1}^n \lambda_j Mx_j . $$
Then we have 
$$ 0 \leq M|y|\leq \sum_{j=1}^n \lambda_j M^2x_j=0 , $$ 
which proves the claim. Now, if $z \in {\mathcal N}(M)^d$ then $M z \in \mathcal R(M) \subseteq \mathcal N(M)$.
This shows that, with respect to the band decomposition $L=\mathcal N(M)\oplus \mathcal N(M)^d$, we have 
$$ M =\left[\begin {array}{cc}
0 & X\\
0 & 0\\
\end {array}\right] . $$

Let $S$ be an arbitrary operator in $\mathcal S$, and let 
$$ S = 
\left[ \begin {array}{cc}
            S_1 & S_2 \\
            S_3 & S_4 \\
\end {array} \right] 
$$ 
be its block operator matrix with respect to the band decomposition 
$L=\mathcal N(M)\oplus \mathcal N(M)^d$. 
Then 
$$  M S =\left[ \begin {array}{cc}
            X S_3 & X S_4 \\
            0 & 0 \\
\end {array} \right]  \qquad \textrm{and}\qquad 
S M =\left[ \begin {array}{cc}
            0 & S_1 X  \\
            0 & S_3 X\\
\end {array} \right] .$$
Since $\cD(MS) = \cD(S M)$, we have $\cD(X S_3)=0$ and $\cD(S_3 X)=0.$
Since $MS$ and $SM$ are ideal-triangularizable operators, the operators $X S_3$ and $S_3 X$ are also ideal-triangularizable.
Theorem \ref{complete_decomposability2} implies that $X S_3$ and $S_3 X$ are both quasinilpotent, so that $MS$ and $SM$ are quasinilpotent as well. 

Let $\mathcal J$ be the semigroup ideal in $\mathcal S$ generated by the operator $M$. 
Pick any $S\in \mathcal J$. Then $S$ is a product of operators from $\mathcal S$, and the operator $M$ appears in this product 
at least once.  If $M$ appears either at the beginning or at the end of this product, then $S$ is quasinilpotent by the observation above. Otherwise there exist $S_1$ and $S_2$ in $\mathcal S$ such that  $S=S_1MS_2.$ 
By the well known equality for the spectral radius, we have 
$r(S)=r(S_1MS_2)=r(MS_2S_1)=0$. This implies that $\mathcal J$ consists of quasinilpotent compact operators, so that it is ideal-triangularizable by Theorem \ref{quasinilpotent}. This is again a contradiction with Proposition \ref{o razcepnosti}.

We have shown that the semigroup $\mathcal S$ is ideal-reducible. 
We will finish the proof by applying the Ideal-triangularization lemma. 
The property $\mathcal D(ST)=\mathcal D(TS)$ is inherited by ideal-quotients. 
Indeed, let $J$ be a closed ideal in $L$ that is invariant under the semigroup $\mathcal S$. Let $S|_{J}$ denote the restriction of the operator $S$ to $J$, and let $P_{J}$ be the band projection onto $J$. 
Then we have
$$\mathcal D(S|_{J}T|_{J})=\sum_{a\in \mathcal A\cap J}P_a(S|_{J})(T|_{J})P_a=\sum_{a\in \mathcal A\cap J}P_a(ST)|_{J}P_a=$$
$$=\sum_{a\in \mathcal A\cap J}P_aP_{J}STP_{J}P_a=P_{J}\left(\sum_{a\in \mathcal A}P_aSTP_a\right)P_{J}=P_{J}\cD(ST)P_{J}.$$
This implies that
$$\mathcal D(S|_{J}T|_{J})=P_{J}\cD(ST)P_{J}=P_{J}\cD(TS)P_{J}=\mathcal D(T|_{J}S|_{J}).$$
This proves that the property of zero diagonals of commutators of operators from $\mathcal S$  is inherited to restrictions on invariant closed ideals.  
Since the induced operator $T_{J}$ of the operator $T$ on the quotient Banach lattice $E/J$ can be identified with the compression of the operator $T$ to $J^d$, we can (similarly as above) prove that the property that the diagonal of every commutator of operators from $\mathcal S$ is zero is inherited by induced operators on the quotient Banach lattice $E/J$. 
Since ideal-triangularizability is inherited by ideal-quotients by \cite[Proposition 2.3]{kanna}, we finish the proof by applying the Ideal-triangularization lemma.
\end{proof}

In the finite-dimensional case Theorem  \ref{semigroup1} can be stated as follows.

\begin {corollary}
\label{matrices}
Let $\mathcal S$ be a semigroup of nonnegative $n\times n$ matrices. Suppose that for every matrix $S\in \mathcal S$ there exists a permutation matrix $P_S$ (depending on $S$) such that the matrix $P_S S P_S^{-1}$ is upper triangular. 
If $\cD(S T) = \cD(T S)$ for every pair $S,T \in\mathcal S$, then there exists a permutation matrix $P$ such that every matrix 
in the semigroup $P\mathcal S P^{-1}$ is upper triangular. 
\end {corollary}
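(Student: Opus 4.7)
The plan is to recognize Corollary~\ref{matrices} as the finite-dimensional translation of Theorem~\ref{semigroup1}. Equip $\mathbb{R}^n$ with its standard coordinatewise ordering, making it a Banach lattice whose norm is trivially order continuous. A matrix is a positive operator exactly when its entries are nonnegative, and every matrix is automatically compact since it has finite rank; so every $S\in\mathcal{S}$ is a positive compact operator on $\mathbb{R}^n$.

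The main task is to translate ``conjugation by a permutation matrix makes $S$ upper triangular'' into the Banach-lattice notion of ideal-triangularizability. The atoms of $\mathbb{R}^n$ are the standard basis vectors $e_1,\ldots,e_n$, and since any closed ideal is generated by the atoms it contains, the closed ideals of $\mathbb{R}^n$ are exactly the coordinate subspaces $\mathrm{span}\{e_i : i\in I\}$ with $I\subseteq\{1,\ldots,n\}$. Hence every maximal chain of closed ideals has the form $\{0\}\subsetneq \mathrm{span}\{e_{\sigma(1)}\}\subsetneq\cdots\subsetneq\mathrm{span}\{e_{\sigma(1)},\ldots,e_{\sigma(n)}\}=\mathbb{R}^n$ for some permutation $\sigma$ of $\{1,\ldots,n\}$. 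A direct check shows that $S$ leaves such a chain invariant if and only if, letting $P_\sigma$ denote the permutation matrix with $P_\sigma e_k = e_{\sigma(k)}$, the matrix $P_\sigma^{-1} S P_\sigma$ is upper triangular. Therefore the hypothesis that each $S\in\mathcal{S}$ is conjugated to an upper triangular matrix by some permutation matrix $P_S$ is exactly the ideal-triangularizability of $S$ in the Banach-lattice sense.

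With this dictionary in place, every hypothesis of Theorem~\ref{semigroup1} is satisfied: the operators in $\mathcal{S}$ are positive, compact, individually ideal-triangularizable, and satisfy $\cD(ST)=\cD(TS)$. The theorem then produces a single maximal chain of closed ideals invariant under every operator of $\mathcal{S}$. Transported back through the dictionary, this chain is described by a single permutation $\sigma$, and so the single permutation matrix $P:=P_\sigma^{-1}$ simultaneously upper-triangularizes every element of $\mathcal{S}$, as required. The entire difficulty has been absorbed into Theorem~\ref{semigroup1}; the only care needed here is the bookkeeping of permutation conventions when matching ``invariant coordinate chain'' with ``simultaneously upper-triangularizable after conjugation''.
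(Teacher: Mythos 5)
Your proposal is correct and is precisely how the paper treats Corollary~\ref{matrices}: the paper states it as the finite-dimensional specialization of Theorem~\ref{semigroup1} without separate proof, and your write-up simply fills in the standard dictionary between permutation conjugation to upper triangular form and ideal-triangularizability on $\mathbb{R}^n$.
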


The following example which was already introduced in \cite{DJK} shows that Theorem \ref{semigroup1} and Corollary \ref{matrices} do not hold for collections of ideal-triangularizable nonnegative matrices with zero diagonals of commutators.

\begin {example}
{\rm
Let $e_1$, $e_2$, $\ldots$, $e_n$ be the standard basis vectors of $\RR^n$, where $n \ge 3$.
Define ideal-triangularizable nilpotent matrices by $A_i = e_i e_{i+1}^T$ 
for $i=1, 2, \ldots, n-1$, and $A_n =  e_n e_1^T$. Then the collection $\{A_1, A_2, \ldots, A_n\}$ has the property that 
$\cD(A_i A_j)= 0$ for all $1\leq i,j\leq n$. We claim that the collection is not ideal-triangularizable. Assume the contrary. 
Then the sum $S= A_1+A_2+\ldots+A_n$ is ideal-triangularizable. Since all the diagonal entries of $S$ are zero, $S$ must be nilpotent 
which contradicts the fact that $S^n = I$.}
\end {example}

The following example shows that Theorem \ref{semigroup1} and Corollary \ref{matrices} do not hold without the assumption that operators are positive.

\begin{example} 
{\rm 
Let
$$ 
A = \left[ \begin {array}{cccc}
            0 & 0 & 1 & -1 \\
            0 & 0 & -1 & 1 \\
            0 & 0 & 0 & 0 \\
            0 & 0 & 0 & 0 \\
           \end {array} \right] 
\ \ \ {\rm and} \ \ \ \ 
B = \left[ \begin {array}{cccc}
            0 & 0 & 0 & 0 \\
            0 & 0 & 0 & 0 \\
            1 & 1 & 0 & 0 \\
            1 & 1 & 0 & 0 \\
           \end {array} \right] .           
$$
Then $A^2 = B^2 = A B = B A = 0$, so that $\cS = \{0, A, B\}$ 
is a semigroup of ideal-triangularizable matrices such that $\cD(S) = 0$ for all $S \in \cS$. 
However, $\cS$ is not ideal-triangularizable, as the diagonal of the matrix 
$$ 
|A| + B = \left[ \begin {array}{cccc}
            0 & 0 & 1 & 1 \\
            0 & 0 & 1 & 1 \\
            1 & 1 & 0 & 0 \\
            1 & 1 & 0 & 0 \\
           \end {array} \right] 
$$          
is zero,  but the matrix is not nilpotent. }
\end{example}

{\it Acknowledgments.} This work was supported in part by the Slovenian Research Agency. \\

\bigskip
		
\noindent
     Roman Drnov\v{s}ek, Marko Kandi\'{c} : \\
     Faculty of Mathematics and Physics \\
     University of Ljubljana \\
     Jadranska 19 \\
     1000 Ljubljana \\
     Slovenia \\[1mm]
     E-mails : roman.drnovsek@fmf.uni-lj.si, marko.kandic@fmf.uni-lj.si 

\end{document}